\pgfplotsset{compat=1.18}
\newcommand{\ol}{\overline}
\renewcommand{\phi}{\varphi}
\newcommand{\llb}{\left\lbrace}
\newcommand{\rrb}{\right\rbrace}
\newcommand{\llv}{\left\lvert}
\newcommand{\rrv}{\right\rvert}
\newcommand{\1}{\mathbbm{1}}
\newcommand{\mrev}[1]{\href{http://mathscinet.ams.org/mathscinet/article?mr=#1}{MR#1}}
\newcommand{\zbl}[1]{\href{http://zbmath.org/#1}{Zbl #1}}
\DeclareMathOperator{\Tor}{Tor}
\DeclareMathOperator{\Ext}{Ext}
\DeclareMathOperator{\Hom}{Hom}
\newtheorem{thm}{Theorem}[section]
\theoremstyle{plain}
\newtheorem{prop}[thm]{Proposition}
\newtheorem{lem}[thm]{Lemma}
\newtheorem{cor}[thm]{Corollary}
\newtheorem*{theorem*}{Theorem}
\theoremstyle{definition}
\newtheorem{defn}[thm]{Definition}
\theoremstyle{remark}
\newtheorem{rem}[thm]{Remark}
\newtheorem{eg}[thm]{Example}
\title{Cohomology of rook-Brauer algebras and their subalgebras}
\author{Andrew Fisher}
\address{(Andrew Fisher) School of Mechanical, Aerospace and Civil Engineering, Sir Frederick Mappin Building, University of Sheffield, Mappin Street, S1 4DT, UK}
\email{andrew.fisher@sheffield.ac.uk}
\author{Daniel Graves}
\address{(Daniel Graves) Lifelong Learning Centre, University of Leeds, Woodhouse, Leeds, LS2 9JT, UK}
\email{dan.graves92@gmail.com}
\begin{document}

\keywords{diagram algebras, rook-Brauer algebras, (walled) Brauer algebras, Motzkin algebras, Temperley--Lieb algebras}
\subjclass{16E40, 20J06, 16E30}


\begin{abstract}
This paper studies the (co)homology of rook-Brauer algebras and their subalgebras. Our main results focus on the cohomology of rook-Brauer algebras (which is related to the cohomology of symmetric groups), the cohomology of Motzkin algebras (for which we obtain a vanishing result in positive degrees) and the cohomology of walled Brauer algebras (which is related to the cohomology of products of symmetric groups). Along the way we collect some cohomological analogues of known results for Temperley--Lieb algebras, Brauer algebras and rook algebras.
\end{abstract}

\maketitle

\section{Introduction}

\emph{Brauer algebras} were first introduced in \cite{Brauer}, where they were shown to exhibit a Schur--Weyl duality with the orthogonal groups. Since then, one of the most common presentations of the Brauer algebras is in the form of \emph{diagram algebras}. A \emph{Brauer $n$-diagram} is a graph on two columns of $n$ vertices such that each vertex is connected to precisely one other by an edge. One forms an algebra, $\mathcal{B}_n(\delta)$, from such diagrams by taking $k$-linear combinations (for some commutative ground ring, $k$). The product is induced from gluing two diagrams together along a column of vertices and replacing any loops in the middle column by a parameter $\delta$ in the ground ring (precise definitions given below in Section \ref{alg-sec}). 

A \emph{rook-Brauer $n$-diagram} (sometimes called a \emph{partial Brauer $n$-diagram}) is a graph on two columns of $n$ vertices such that each vertex is connected to \emph{at most} one other by an edge. As above, we can form an algebra, $\mathcal{RB}_n(\delta,\varepsilon),$ with basis the rook-Brauer $n$-diagrams. The key difference, since we allow isolated vertices in our graphs, is that we can obtain two topologically different types of component within the middle column. We can obtain contractible components (isolated vertices and edges that lie only within the middle column) and we can form loops as before (see Example \ref{comp-eg} below for an example of this composition). The parameter $\delta$, as for the Brauer algebras, replaces any loops formed in the composition of diagrams. The parameter $\varepsilon$ replaces any contractible components in the middle column of a composite of two diagrams.

Rook-Brauer algebras are also of interest from the point of view of representation theory. They exhibit a Schur--Weyl duality with the orthogonal groups \cite{HD,MM} and, more generally, with twin groups \cite{DG-twin}.

The rook-Brauer algebras also contain a number of interesting subalgebras spanned by subsets of their diagram basis. We briefly recall the subalgebras that we will study in this paper, together with a short history.

The \emph{Brauer algebra}, $\mathcal{B}_n(\delta)$, described above, is a subalgebra and there is a vast literature on these. Note that since we do not allow isolated vertices, we can drop $\varepsilon$ from the notation of this subalgebra. The same is true for the Temperley--Lieb algebras and walled Brauer algebras described below.
    
The \emph{Temperley--Lieb algebra}, $\mathcal{TL}_n(\delta)$, is spanned by the Brauer diagrams that are also planar in the sense that all edges are contained within the rectangle formed by the vertices and no edges intersect. They are of interest in statistical mechanics, knot theory and representation theory \cite{TL,Jones1,Jones2,Westbury}.

The \emph{rook algebra}, $\mathcal{R}_n(\varepsilon)$, is spanned by diagrams such that a vertex is either isolated or connected to a vertex in the other column. The \emph{planar rook algebra}, $\mathcal{PR}_n(\varepsilon)$ is spanned by those rook diagrams which are planar. The representation theory of these algebras have been studied in \cite{HalvR, Xiao, Campbell,FHH-planar-rook,BM}. Since we cannot have edges between two vertices in the same column, we cannot form any loops. Hence we drop $\delta$ from the notation in these algebras.

The \emph{Motzkin algebras}, $\mathcal{M}_n(\delta,\varepsilon)$, are spanned by rook-Brauer diagrams that are also planar. They were first introduced in the paper \cite{BHal}. They exhibit a Schur--Weyl duality with the special linear Lie algebra of order $2$ over $k$ (see \cite{BHal} and also \cite[Sections 1, 9, 10]{Doty_Giaquinto}). Motzkin algebras have also been used to construct a tensor category with fusion rules of type $A$ \cite{JoYa}.

The \emph{walled Brauer algebras}, $\mathcal{B}_{r,s}(\delta)$, were introduced as a centralizer of a diagonal action of a general linear group on a tensor space (see \cite{Koike,Turaev,BCHLLS,Nikitin} for instance). Their representation theory has been well studied (see \cite{Halverson-walled,CDDM,JK,CVPSW} for instance). They are related to Khovanov's arc algebra \cite{Brundan-Stroppel} and they also arise in quantum physics \cite{BJSH}. Walled Brauer $(r+s)$-diagrams share the property of Brauer diagrams that every vertex is connected to precisely one other but we now have extra conditions. We have a ``semi-porous wall" separating the first $r$ vertices in the two columns from the last $s$ vertices in the two columns. Any edge between the two columns is forbidden from crossing the wall but any edge between two vertices in the same column is required to cross the wall.

The aim of this paper is to study the (co)homology of the rook-Brauer algebras and those subalgebras which have not yet appeared in the literature. Each of the algebras mentioned above can be equipped with an augmentation so we can define their homology and cohomology as certain $\Tor$ and $\Ext$ groups following Benson \cite[Definition 2.4.4]{Benson1}.

The homology of the Temperley--Lieb algebras and the Brauer algebras has been well studied (see \cite{BH1,Sroka,Boyde,BBRWS,Boyde-planar} for the Temperley--Lieb case and \cite{BHP,Boyde} for the Brauer algebra case). These form part of a larger literature on the (co)homology of diagram algebras which also includes the \emph{partition algebras} and variations such as the \emph{Tanabe algebras} and the \emph{coloured partition algebras} \cite{BHP2,Boyde2,Fisher-Graves, Cranch-Graves}, the \emph{Jones annular algebras} \cite{Boyde2} and the \emph{dilute Temperley--Lieb algebras} \cite{FG-dilute}.

For the Brauer algebra, $\mathcal{B}_n(\delta)$, it is known that for any $\delta$, the homology is isomorphic to the homology of the symmetric group $\Sigma_n$ if $n$ is odd \cite[Theorem 1.3]{Boyde}. If $n$ is even, such an isomorphism is only true in a range \cite[Theorem B]{BHP}. It is known that the range is sharp for $n=2$ but this is not known for larger even numbers.

Our first main result concerns the walled Brauer algebras. Our result takes a different form to the Brauer algebra, not depending directly on the parity of the indices, but depending on whether the indices $r$ and $s$ are equal. We prove that for the walled Brauer algebra, $\mathcal{B}_{r,s}(\delta)$, the (co)homology can be identified with the (co)homology of the product of symmetric groups $\Sigma_r\times \Sigma_s$ for any $\delta$ when $r\neq s$ and that the isomorphism holds in a range when $r=s$. Our main result is as follows (Theorem \ref{WB-cohom-thm} in the text):

\begin{theorem*}
Let $r,s\geqslant 1$. For any $\delta\in k$ and for $r\neq s$, there exist isomorphisms of graded $k$-modules
\[\Tor_{\star}^{\mathcal{B}_{r,s}(\delta)}(\1,\1) \cong H_{\star}(\Sigma_{r}\times \Sigma_s,\1) \quad \text{and} \quad \Ext_{\mathcal{B}_{r,s}(\delta)}^{\star}(\1,\1) \cong H^{\star}(\Sigma_{r}\times \Sigma_s,\1).\]
Furthermore, when $r=s$, these isomorphisms hold in the range $0\leqslant \star\leqslant \frac{r+s}{2}-1$.   
\end{theorem*}

We show that the range given when $r=s$ cannot be improved in general, by showing that it is sharp when $r=s=1$. In this case, the walled Brauer algebra $\mathcal{B}_{1,1}(\delta)$ is isomorphic to the Temperley--Lieb algebra $\mathcal{TL}_2(\delta)$ so we can deduce sharpness from the calculations of Boyd and Hepworth \cite[Proposition 7.1]{BH1}. We also show that the isomorphism of graded $k$-modules holds for any $r$ and $s$ when the parameter $\delta$ is invertible in $k$ (Theorem \ref{walled-Brauer-thm} in the text). The proof uses idempotent covers and the Mayer--Vietoris complex from \cite{Boyde2}.

Our second main result concerns the rook-Brauer algebras and the Motzkin algebras. There already exist statements on the homology of rook-Brauer algebras in the literature, namely Theorem 7.3 and Corollary 7.4 of \cite{Boyde}. These results claim that if $\varepsilon$ is invertible in $k$ and $\delta \in k$ is any element, then the homology of the rook-Brauer algebra $\mathcal{RB}_n(\delta,\varepsilon)$ is isomorphic to the homology of the Brauer algebra $\mathcal{B}_n(\delta)$. The justification for this claim occurs before the statement of Theorem 7.3 and goes as follows. In Section 5 \emph{in loc.~cit.}, Boyde defines rook-Brauer $n$-diagrams $\rho_i$, which are formed from the rook-Brauer $n$-diagram with $n$ horizontal edges by omitting the $i^{\mathrm{th}}$ edge. For example, the diagrams $\rho_1$ and $\rho_2$ in $\mathcal{RB}_2(\delta,\varepsilon)$ are pictured below:

\begin{center}
\begin{multicols}{2}
\begin{tikzpicture}
\fill (0,0) circle[radius=2pt];
\fill (0,1) circle[radius=2pt];
\fill (1,0) circle[radius=2pt];
\fill (1,1) circle[radius=2pt];  
\path[-] (0,0) edge (1,0);
\end{tikzpicture}
\\
\begin{tikzpicture}
\fill (0,0) circle[radius=2pt];
\fill (0,1) circle[radius=2pt];
\fill (1,0) circle[radius=2pt];
\fill (1,1) circle[radius=2pt];  
\path[-] (0,1) edge (1,1);
\end{tikzpicture}
\end{multicols}
\end{center}

Let $I$ be the two-sided ideal in $\mathcal{RB}_n(\delta,\varepsilon)$ generated by the elements $\rho_i$. It is implicitly claimed that the quotient $\mathcal{RB}_n(\delta , \varepsilon)/I$ is isomorphic as an algebra to the Brauer algebra $\mathcal{B}_n(\delta)$. However, this is not the case. We have the following product in $\mathcal{RB}_2(\delta,\varepsilon)$, which lies in the two-sided ideal $I$ since we are multiplying on the left and right of $\rho_2$:

\begin{center}
\begin{tikzpicture}
\fill (0,0) circle[radius=2pt];
\fill (0,1) circle[radius=2pt];
\fill (1,0) circle[radius=2pt];
\fill (1,1) circle[radius=2pt];
\path[-] (0,0) edge [bend right=20] (0,1);

\draw (1.5,0.5) node {$\cdot$};

\fill (2,0) circle[radius=2pt];
\fill (2,1) circle[radius=2pt];
\fill (3,0) circle[radius=2pt];
\fill (3,1) circle[radius=2pt];
\path[-] (2,1) edge (3,1);

\draw (3.5,0.5) node {$\cdot$};

\fill (4,0) circle[radius=2pt];
\fill (4,1) circle[radius=2pt];
\fill (5,0) circle[radius=2pt];
\fill (5,1) circle[radius=2pt];
\path[-] (5,0) edge [bend left=20] (5,1);

\draw (5.5,0.5) node {$=$};
\draw (6,0.5) node {$\varepsilon^3$};

\fill (6.5,0) circle[radius=2pt];
\fill (6.5,1) circle[radius=2pt];
\fill (7.5,0) circle[radius=2pt];
\fill (7.5,1) circle[radius=2pt];
\path[-] (6.5,0) edge [bend right=20] (6.5,1);
\path[-] (7.5,0) edge [bend left=20] (7.5,1);
\end{tikzpicture}
\end{center}

The diagram on the right is a scalar multiple of a Brauer $2$-diagram. Since $\varepsilon$ is invertible, we can scale this to show that the Brauer $2$-diagram 
\begin{center}
\begin{tikzpicture}
\fill (0,0) circle[radius=2pt];
\fill (0,1) circle[radius=2pt];
\fill (1,0) circle[radius=2pt];
\fill (1,1) circle[radius=2pt];  
\path[-] (0,0) edge [bend right=20] (0,1);
\path[-] (1,0) edge [bend left=20] (1,1);
\end{tikzpicture}
\end{center}
also lies in $I$ so the isomorphism of algebras does not hold.

We show that if $\varepsilon$ is invertible then, for any $\delta$, the (co)homology of the rook-Brauer algebra $\mathcal{RB}_n(\delta, \varepsilon)$ is isomorphic to the (co)homology of the symmetric group $\Sigma_n$. Our methods can also be used to show that, under the same conditions, the (co)homology of Motzkin algebras vanishes in positive degrees. The following is a combination of Theorems \ref{RB-thm} and \ref{Motzkin-thm}.

\begin{theorem*}
Let $\varepsilon$ be invertible. Then, for any $\delta$, there exist isomorphisms of graded $k$-modules
\[\Tor_{\star}^{\mathcal{RB}_n(\delta,\varepsilon)}(\1 ,\1) \cong H_{\star}(\Sigma_n ,\1) \quad \text{and} \quad \Ext_{\mathcal{RB}_n(\delta,\varepsilon)}^{\star}(\1 ,\1) \cong H^{\star}(\Sigma_n ,\1)\]
and the graded $k$-modules $\Tor_{\star}^{\mathcal{M}_n(\delta,\varepsilon)}(\1 ,\1)$ and $\Ext_{\mathcal{M}_n(\delta,\varepsilon)}^{\star}(\1 ,\1)$ are isomorphic to $k$ concentrated in degree zero.
\end{theorem*}

\begin{rem}
    Since the submission of this paper, Khoa Ta \cite{KhoaTa} has found an alternative proof of the homological part of this result using \emph{inductive resolutions}, after the fashion of \cite{BH1,BHP,BHP2}.
\end{rem}

Along the way we also collect a couple of cohomological analogues of known results for the homology of Brauer algebras, Temperley--Lieb algebras and rook algebras found in \cite{Boyde}. We also prove the related results for the planar rook algebras. These results follow methods introduced in \cite{Boyde}. 

In Section \ref{Boyde-sec}, we prove the cohomological analogue of \cite[Theorem 1.11]{Boyde}: if a subalgebra of the rook-Brauer algebra is free, as a module, on a basis of diagrams and if certain left ideals (based on \emph{link states}, see Section \ref{link-state-sec}) are principal and generated by an idempotent then we can deduce a chain of isomorphisms on cohomology, depending on the number of propagating edges.

The paper is structured as follows. In Section \ref{alg-sec}, we recall the definition of the rook-Brauer algebras, together with the subalgebras introduced above. In Section \ref{augmentation-sec} we define augmentations for each of these algebras and define their (co)homology in terms of the augmentation. In Section \ref{link-state-sec} we recall the notion of a link state for a rook-Brauer diagram and use this to define families of ideals which we will use to prove our main results. We also recall the notions of double diagram and sesqui-diagram. 

In Section \ref{Boyde-sec}, we prove the cohomological analogues of \cite[Theorem 4.3]{Boyde} and \cite[Corollary 4.4]{Boyde} and provide an application in the case of subalgebras of the rook-Brauer algebras. This will be our main technical result for studying the cohomology of the rook-Brauer algebras and the Motzkin algebras. In Section \ref{known-results-sec}, we gather cohomological analogues of results for the rook algebras with $\varepsilon$ invertible \cite[Theorem 5.4]{Boyde} and for Brauer algebras and Temperley--Lieb algebras with $n$ odd (see \cite[Theorems 1.1, 1.3]{Boyde} and \cite[Theorem A]{Sroka}). We also include results on the (co)homology of planar rook algebras. In Section \ref{RB-M-sec}, we prove that if $\varepsilon$ is invertible then the (co)homology of the rook-Brauer algebra $\mathcal{RB}_n(\delta,\varepsilon)$ is isomorphic to the (co)homology of the symmetric group $\Sigma_n$ and the (co)homology of the Motzkin algebras vanishes in positive degrees.

In Section \ref{WB-sec} we show that the (co)homology of the walled Brauer algebra $\mathcal{B}_{r,s}(\delta)$ is isomorphic to the cohomology of $\Sigma_r\times \Sigma_s$ when the parameter $\delta$ is invertible. We then construct a $k$-free idempotent left cover of the two-sided ideal described above and use this to show that the (co)homology of the walled Brauer algebra $\mathcal{B}_{r,s}(\delta)$ is isomorphic to the cohomology of $\Sigma_r\times \Sigma_s$ when $r\neq s$ for any $\delta \in k$ and that such an isomorphism holds in a range when $r=s$ is even.

\subsection*{Acknowledgements}
We would like to thank both James Brotherston and Natasha Cowley for helpful and interesting conversations whilst writing this paper. We would like to thank James Cranch and Sarah Whitehouse for their feedback and support in this project and related work. We'd like to thank Rachael Boyd and Richard Hepworth-Young for interesting conversations at the 2024 British Topology Meeting in Aberdeen. We would like to thank Guy Boyde for his comments and feedback on previous drafts of this paper. We would like to thank the anonymous referee for their helpful comments.

\subsection{Conventions}
Throughout, $k$ will be a unital, commutative ring and $n$ will be a positive integer unless otherwise stated.

\section{Rook-Brauer algebras and their subalgebras}
\label{alg-sec}

In this section, we recall the definition of the rook-Brauer algebras, the Motzkin algebras, the (planar) rook algebras, the (walled) Brauer algebras and the Temperley--Lieb algebras.

\begin{defn}
\label{rb-diag-defn}
A \emph{rook-Brauer $n$-diagram} is a graph on two columns of $n$ vertices where each edge is incident to two distinct vertices, there is at most one edge between any two vertices and any vertex is connected to at most one other vertex.

By convention, the vertices down the left-hand column will be labelled by $1,\dots , n$ in ascending order from top to bottom and the vertices down the right-hand column will be labelled by $\ol{1},\dotsc , \ol{n}$ in ascending order from top to bottom. Furthermore, we assume that all edges are contained within the convex hull of the vertices.
\end{defn}

\begin{defn}
\label{terminology-defn}
We collect some important terminology for graphs that will be used throughout the rest of the paper.
\begin{enumerate}
    \item An edge that connects the left-hand column of vertices to the right-hand column of vertices will be called a \emph{propagating edge}. 
    \item An edge that connects two vertices in the same column will be called a \emph{non-propagating edge}. 
    \item A vertex not connected to any other by an edge will be called an \emph{isolated vertex}.
    \item An $n$-diagram having precisely $n$ propagating edges will be called a \emph{permutation diagram}. All other $n$-diagrams will be referred to as \emph{non-permutation diagrams}.
\end{enumerate}
\end{defn}

\begin{defn}
Let $\delta,\, \varepsilon \in k$. The \emph{rook-Brauer algebra}, $\mathcal{RB}_n(\delta,\varepsilon)$, is the $k$-algebra with basis consisting of all rook-Brauer $n$-diagrams with the multiplication defined by the $k$-linear extension of the following product of diagrams. Let $d_1$ and $d_2$ be rook-Brauer $n$-diagrams. The product $d_1d_2$ is obtained by the following procedure:
\begin{itemize}
    \item Place the diagram $d_2$ to the right of the diagram $d_1$ and identify the vertices $\ol{1},\dotsc , \ol{n}$ in $d_1$ with the vertices $1,\dotsc , n$ in $d_2$. Call this diagram $d_1\ast d_2$. We drop the labels of the vertices in the middle column and we preserve the labels of the left-hand column and right-hand column.
    \item Count the number of loops that lie entirely within the middle column. Call this number $\alpha$. Count the number of contractible connected components that lie entirely within the middle column (that is, isolated vertices in the middle column or an edge which is connected to neither the left-hand column nor the right-hand column). Call this number $\beta$.
    \item Make a new rook-Brauer $n$-diagram, $d_3$, as follows. Given distinct vertices $x$ and $y$ in the set $\llb 1, \ol{1}, \dotsc , n , \ol{n}\rrb$, $d_3$ has an edge between $x$ and $y$ if there is a path from $x$ to $y$ in $d_1\ast d_2$.
    \item We define $d_1d_2=\delta^{\alpha}\varepsilon^{\beta}d_3$.
\end{itemize}
The product is associative and the identity element consists of the diagram with $n$ horizontal edges.
\end{defn}

\begin{eg}
\label{comp-eg}
Here is an example of composition in $\mathcal{RB}_4(\delta,\varepsilon)$. We drop the labels on the vertices for a clearer picture. In the diagram below, we form one loop in the middle column and we have one isolated vertex in the middle column and so we obtain a factor of $\delta\varepsilon$.
\begin{center}
    \begin{tikzpicture}
\fill (0,0) circle[radius=2pt];
\fill (0,1) circle[radius=2pt];
\fill (0,2) circle[radius=2pt];
\fill (0,3) circle[radius=2pt];
\fill (1,0) circle[radius=2pt];
\fill (1,1) circle[radius=2pt];
\fill (1,2) circle[radius=2pt];
\fill (1,3) circle[radius=2pt];
\fill (2,0) circle[radius=2pt];
\fill (2,1) circle[radius=2pt];
\fill (2,2) circle[radius=2pt];
\fill (2,3) circle[radius=2pt];
\fill (3.5,0) circle[radius=2pt];
\fill (3.5,1) circle[radius=2pt];
\fill (3.5,2) circle[radius=2pt];
\fill (3.5,3) circle[radius=2pt];
\fill (4.5,0) circle[radius=2pt];
\fill (4.5,1) circle[radius=2pt];
\fill (4.5,2) circle[radius=2pt];
\fill (4.5,3) circle[radius=2pt];
\draw (2.5,1.5) node {$=$};
\draw (3,1.5) node {$\delta \varepsilon$};
\path[-] (0,0) edge [bend right=20] (0,2);
\path[-] (1,2) edge [bend right=20] (1,3);
\path[-] (1,2) edge [bend left=20] (1,3);
\draw (0,3) -- (1,0) -- (2,2);
\path[-] (2,1) edge [bend left=20] (2,3);
\path[-] (3.5,0) edge [bend right=20] (3.5,2);
\draw (3.5,3) -- (4.5,2);
\path[-] (4.5,1) edge [bend left=20] (4.5,3);
    \end{tikzpicture}
\end{center}

We refer the reader to the papers \cite{HD} and \cite{Boyde} for further examples of composing rook-Brauer diagrams.   
\end{eg}

\begin{defn}
\label{Motzkin-defn}
Fix $n\geqslant 1$.
\begin{enumerate}
    \item A \emph{Motzkin $n$-diagram} is a planar rook-Brauer $n$-diagram, that is, a rook-Brauer $n$-diagram such that no edge crosses another. The \emph{Motzkin algebra}, $\mathcal{M}_n\left(\delta, \varepsilon\right)$, is the subalgebra of $\mathcal{RB}_n(\delta,\varepsilon)$ spanned $k$-linearly by the Motzkin $n$-diagrams.
    \item A \emph{rook $n$-diagram} is a rook-Brauer $n$-diagram such that each connected component is either an isolated vertex, or it consists of exactly one vertex from the left-hand column and one vertex from the right-hand column. A \emph{planar rook $n$-diagram} is a rook $n$-diagram which is planar. The \emph{rook algebra}, $\mathcal{R}_n(\varepsilon)$, is the subalgebra of $\mathcal{RB}_n(\delta,\varepsilon)$ spanned $k$-linearly by the rook $n$-diagrams. The \emph{planar rook algebra}, $\mathcal{PR}_n(\varepsilon)$, is the subalgebra of $\mathcal{RB}_n(\delta,\varepsilon)$ spanned $k$-linearly by the planar rook $n$-diagrams. 
    \item A \emph{Brauer $n$-diagram} is a rook-Brauer $n$-diagram with no isolated vertices. The \emph{Brauer algebra}, $\mathcal{B}_n(\delta)$, is the subalgebra of $\mathcal{RB}_n(\delta,\varepsilon)$ spanned $k$-linearly by the Brauer $n$-diagrams.
    \item A \emph{Temperley--Lieb $n$-diagram} is a planar rook-Brauer $n$-diagram with no isolated vertices. The \emph{Temperley--Lieb algebra}, $\mathcal{TL}_n(\delta)$, is the subalgebra of $\mathcal{RB}_n(\delta,\varepsilon)$ spanned $k$-linearly by the Temperley--Lieb $n$-diagrams.
\end{enumerate}
\end{defn}

\begin{defn}
Let $r$ and $s$ be non-negative integers. A \emph{walled Brauer $(r+s)$-diagram} is a Brauer $(r+s)$-diagram such that
\begin{itemize}
    \item a propagating edge must either join one of the first $r$ vertices of the left-hand column with one of the first $r$ vertices of the right-hand column or join one of the last $s$ vertices of the left-hand column with one of the last $s$ vertices of the right-hand column,
    \item a non-propagating edge in either column must join one of the first $r$ vertices with one of the last $s$ vertices.
\end{itemize}
\end{defn}

\begin{eg}
\label{WB-eg}
Consider the two diagrams below. We have drawn in a dashed horizontal line to indicate the division of the vertices into two parts.
\begin{center}
    \begin{tikzpicture}
\fill (0,0) circle[radius=2pt];
\fill (0,1) circle[radius=2pt];
\fill (0,2) circle[radius=2pt];
\fill (0,3) circle[radius=2pt];
\fill (1,0) circle[radius=2pt];
\fill (1,1) circle[radius=2pt];
\fill (1,2) circle[radius=2pt];
\fill (1,3) circle[radius=2pt];
\fill (3,0) circle[radius=2pt];
\fill (3,1) circle[radius=2pt];
\fill (3,2) circle[radius=2pt];
\fill (3,3) circle[radius=2pt];   
\fill (4,0) circle[radius=2pt];
\fill (4,1) circle[radius=2pt];
\fill (4,2) circle[radius=2pt];
\fill (4,3) circle[radius=2pt]; 

\draw (0,0) node[left] {\footnotesize $4$};
\draw (0,1) node[left] {\footnotesize $3$};
\draw (0,2) node[left] {\footnotesize $2$};
\draw (0,3) node[left] {\footnotesize $1$};
\draw (1,0) node[right] {\footnotesize $\ol{4}$};
\draw (1,1) node[right] {\footnotesize $\ol{3}$};
\draw (1,2) node[right] {\footnotesize $\ol{2}$};
\draw (1,3) node[right] {\footnotesize $\ol{1}$};
\draw (3,0) node[left] {\footnotesize $4$};
\draw (3,1) node[left] {\footnotesize $3$};
\draw (3,2) node[left] {\footnotesize $2$};
\draw (3,3) node[left] {\footnotesize $1$};
\draw (4,0) node[right] {\footnotesize $\ol{4}$};
\draw (4,1) node[right] {\footnotesize $\ol{3}$};
\draw (4,2) node[right] {\footnotesize $\ol{2}$};
\draw (4,3) node[right] {\footnotesize $\ol{1}$};

\draw[dashed] (-1,1.5) -- (5,1.5);

\draw (0,0) -- (1,1);
\draw (0,3) -- (1,2);
\path[-] (0,1) edge [bend right=20] (0,2);
\path[-] (1,0) edge [bend left=20] (1,3);
\draw (3,0) -- (4,0);
\draw (3,2) -- (4,1);
\path[-] (3,1) edge [bend right=20] (3,3);
\path[-] (4,2) edge [bend left=20] (4,3);
    \end{tikzpicture}
\end{center}

The diagram on the left is a walled Brauer $(2+2)$-diagram. The diagram on the right is not: there is a propagating edge that crosses the dividing line and a non-propagating edge in the right-hand column which does not. Both of these prevent this diagram from being a walled Brauer $(2+2)$-diagram.
\end{eg}

\begin{defn}
 Let $r$ and $s$ be non-negative integers. The \emph{walled Brauer algebra} $\mathcal{B}_{r,s}(\delta)$ is the subalgebra of the Brauer algebra $\mathcal{B}_{r+s}(\delta)$ (and therefore a subalgebra of $\mathcal{RB}_{r+s}(\delta,\varepsilon)$) spanned $k$-linearly by the walled Brauer $(r+s)$-diagrams. We note that $\mathcal{B}_{r,0}\cong \mathcal{B}_{0,r} \cong k[\Sigma_r]$. 
\end{defn}

\section{Cohomology of augmented algebras}
\label{augmentation-sec}

Recall that a $k$-algebra is said to be \emph{augmented} if it comes equipped with a $k$-algebra map $\tau\colon A \rightarrow k$, which is called the \emph{augmentation}. (We note that an augmentation is usually denoted by $\varepsilon$, but we are already using this as a parameter in some of our algebras.)

We begin by defining a collection of two-sided ideals inside each of the algebras defined in Section \ref{alg-sec}. Each of the definitions made below is seen to be valid by observing that the composite of two diagrams with $i$ propagating edges and $j$ propagating edges respectively is a scalar multiple of a diagram with at most $\min(i,j)$ propagating edges (see \cite[Section 2.2]{HD}).

\begin{defn}
\label{RB-ideal-defn}
For $0\leqslant i \leqslant n-1$, let $I_i$ denote the two-sided ideal of $\mathcal{RB}_n(\delta , \varepsilon)$ spanned $k$-linearly by rook-Brauer $n$-diagrams having at most $i$ propagating edges. We will use the fact that $I_{-1}=0$.

\end{defn}

Let $A$ denote any of the subalgebras of $\mathcal{RB}_n(\delta , \varepsilon)$ defined in Section \ref{alg-sec}. We can equip $A$ with an augmentation. Since $I_{n-1}$ is a two-sided ideal in $\mathcal{RB}_n(\delta , \varepsilon)$ (spanned $k$-linearly by all non-permutation diagrams), we see that $A\cap I_{n-1}$ is a two-sided ideal in $A$ spanned $k$-linearly by all non-permutation diagrams in $A$. 

\begin{defn}
Let $A$ be any subalgebra of a rook-Brauer algebra defined in Section \ref{alg-sec}. We equip $A$ with the augmentation $\tau\colon A \rightarrow k$ that sends the permutation diagrams to $1 \in k$ and all non-permutation diagrams to $0\in k$.
\end{defn}

\begin{defn}
Let $A$ denote any of the algebras defined in Section \ref{alg-sec}. The \emph{trivial module}, $\mathbbm{1}$, consists of a single copy of the ground ring $k$, where $A$ acts on $k$ via the augmentation.
\end{defn}

We will study the homology and cohomology of the algebras introduced in Section \ref{alg-sec}. In other words, for any such algebra $A$, we are going to consider the graded $k$-modules
\[\Tor_{\star}^{A}(\1 , \1) \quad \text{and} \quad \Ext_A^{\star}(\1 ,\1).\]

\section{Link states and ideals}
\label{link-state-sec}
In this section we recall the notion of a \emph{link state}. This material follows \cite[Section 1]{Boyde}. Link states were first introduced in the paper \cite{RSA}. They in turn point out that the notion is related to \emph{parenthesis structures} \cite{Kauffmann1}, \emph{arch configurations} \cite{DGG} and \emph{cellular structure} \cite{GL-cellular}.

\subsection{Link states for rook-Brauer diagrams}
We begin by recalling link states for rook-Brauer algebras and their subalgebras.

\begin{defn}
By slicing vertically down the middle of a rook-Brauer $n$-diagram we obtain its \emph{left link state} and \emph{right link state}. Explicitly, we split all propagating edges at their midpoint and preserve all non-propagating edges. By restriction this gives us the notion of link states for Motzkin $n$-diagrams, (walled) Brauer $n$-diagrams and Temperley--Lieb $n$-diagrams. 
\end{defn}

\begin{rem}
The right link state of a rook-Brauer $n$-diagram consists of a column of $n$ vertices, labelled $\ol{1},\dotsc \ol{n}$, such that at each vertex we have one of the following three situations:
\begin{itemize}
    \item the vertex has a hanging edge, called a \emph{defect};
    \item the vertex is connected to precisely one other vertex by a non-propagating edge;
    \item the vertex is an isolated vertex.
\end{itemize}
We note that link states for Brauer $n$-diagrams, Temperley--Lieb $n$-diagrams and walled Brauer $(r+s)$-diagrams will not have isolated vertices. Link states for Temperley--Lieb $n$-diagrams and Motzkin $n$-diagrams must be planar.
\end{rem}

\begin{eg}
\label{WB-link-eg}
In the diagram below we see the walled Brauer $(2+2)$-diagram from Example \ref{WB-eg}, together with its right link state:
\begin{center}
    \begin{tikzpicture}
\fill (0,0) circle[radius=2pt];
\fill (0,1) circle[radius=2pt];
\fill (0,2) circle[radius=2pt];
\fill (0,3) circle[radius=2pt];
\fill (1,0) circle[radius=2pt];
\fill (1,1) circle[radius=2pt];
\fill (1,2) circle[radius=2pt];
\fill (1,3) circle[radius=2pt];   
\fill (4,0) circle[radius=2pt];
\fill (4,1) circle[radius=2pt];
\fill (4,2) circle[radius=2pt];
\fill (4,3) circle[radius=2pt]; 

\draw (0,0) node[left] {\footnotesize $4$};
\draw (0,1) node[left] {\footnotesize $3$};
\draw (0,2) node[left] {\footnotesize $2$};
\draw (0,3) node[left] {\footnotesize $1$};
\draw (1,0) node[right] {\footnotesize $\ol{4}$};
\draw (1,1) node[right] {\footnotesize $\ol{3}$};
\draw (1,2) node[right] {\footnotesize $\ol{2}$};
\draw (1,3) node[right] {\footnotesize $\ol{1}$};
\draw (4,0) node[right] {\footnotesize $\ol{4}$};
\draw (4,1) node[right] {\footnotesize $\ol{3}$};
\draw (4,2) node[right] {\footnotesize $\ol{2}$};
\draw (4,3) node[right] {\footnotesize $\ol{1}$};

\draw[dashed] (-1,1.5) -- (5,1.5);

\draw (0,0) -- (1,1);
\draw (0,3) -- (1,2);
\path[-] (0,1) edge [bend right=20] (0,2);
\path[-] (1,0) edge [bend left=20] (1,3);
\draw (4,1) -- (3,1);
\draw (4,2) -- (3,2);
\path[-] (4,0) edge [bend left=20] (4,3);
    \end{tikzpicture}
\end{center}
\end{eg}

\begin{defn}
For $0\leqslant i \leqslant n$, we let $P_i$ denote the set of right link states of rook-Brauer $n$-diagrams with precisely $i$ defects.
\end{defn}

The set $P_i$ consists of the right link states of the diagrams which span the quotient ideal $I_i/I_{i-1}$. This is an important part of the proof of \cite[Corollary 4.4]{Boyde} and Corollary \ref{Ext-cor}.

\begin{defn}
Suppose we have a right link state of a rook-Brauer $n$-diagram. We can remove two defects and replace them with a non-propagating edge joining the two vertices. This operation is called a \emph{splice}. We can also remove a defect, leaving an isolated vertex. This operation is called a \emph{deletion}.
\end{defn}

\begin{rem}
These operations continue to be valid for subsets of rook-Brauer $n$-diagrams, but sometimes with conditions. We do not allow the deletion of defects when considering the (walled) Brauer algebra or the Temperley--Lieb algebra as this would result in isolated vertices. When considering the Temperley--Lieb algebra and the Motzkin algebra, we do not allow splices that would violate the planarity condition. In the case of the walled Brauer algebra, we only allow splices that connect a defect at one of the first $r$ vertices with a defect at one of the last $s$ vertices.  
\end{rem}

\begin{eg}
The diagrams below show the right link state of the walled Brauer $(2+2)$-diagram from Example \ref{WB-link-eg}, together with the result of splicing the two defects.
\begin{center}
    \begin{tikzpicture}
\fill (0,0) circle[radius=2pt];
\fill (0,1) circle[radius=2pt];
\fill (0,2) circle[radius=2pt];
\fill (0,3) circle[radius=2pt];
\draw (0,0) node[right] {\footnotesize $\ol{4}$};
\draw (0,1) node[right] {\footnotesize $\ol{3}$};
\draw (0,2) node[right] {\footnotesize $\ol{2}$};
\draw (0,3) node[right] {\footnotesize $\ol{1}$};
\draw (0,1) -- (-1,1);
\draw (0,2) -- (-1,2);
\path[-] (0,0) edge [bend left=20] (0,3);

\draw[dashed] (-2,1.5) -- (5,1.5);

\fill (3,0) circle[radius=2pt];
\fill (3,1) circle[radius=2pt];
\fill (3,2) circle[radius=2pt];
\fill (3,3) circle[radius=2pt];
\draw (3,0) node[right] {\footnotesize $\ol{4}$};
\draw (3,1) node[right] {\footnotesize $\ol{3}$};
\draw (3,2) node[right] {\footnotesize $\ol{2}$};
\draw (3,3) node[right] {\footnotesize $\ol{1}$};
\path[-] (3,0) edge [bend left=20] (3,3);
\path[-] (3,1) edge [bend left=20] (3,2);
\end{tikzpicture}
\end{center}
\end{eg}

\begin{defn}
Consider a right link state $p \in P_i$. Let $J_p$ denote the left ideal of $\mathcal{RB}_n(\delta, \varepsilon)$ with basis given by the diagrams having right link state obtained from $p$ by a (possibly empty) sequence of splices and deletions.
\end{defn}

One can see that $J_p$ is a left ideal as follows: given rook-Brauer $n$-diagrams $x$ and $y$, the right link state of the composite $xy$ is obtained from the right link state of $y$ by means of a valid sequence of splicings and deletions.

\subsection{Sesqui-diagrams and double diagrams}
\label{sesqui-subsec}

In order to prove our results it will be useful to consider the diagrams formed in composition, together with the composite of a diagram and a right link state. This leads to the notions of \emph{double diagram} and \emph{sesqui-diagram}.

\begin{defn}
For any of the algebras defined in Sections \ref{alg-sec}, the composition of two diagrams $d_1$ and $d_2$ involved forming a diagram $d_1\ast d_2$ with three columns of vertices, formed by identifying the right-hand column of vertices of $d_1$ with the left-hand column of vertices in $d_2$. We call such a diagram a \emph{double diagram}. Henceforth, when using double diagrams we will label the left-hand column of vertices by $1,\dotsc , n$, the middle column of vertices with $1^{\prime},\dotsc , n^{\prime}$ and the right-hand column of vertices with $\ol{1},\dotsc , \ol{n}$.     
\end{defn}

\begin{eg}
The diagram
\begin{center}
    \begin{tikzpicture}
\fill (0,0) circle[radius=2pt];
\fill (0,1) circle[radius=2pt];
\fill (0,2) circle[radius=2pt];
\fill (0,3) circle[radius=2pt];
\fill (1,0) circle[radius=2pt];
\fill (1,1) circle[radius=2pt];
\fill (1,2) circle[radius=2pt];
\fill (1,3) circle[radius=2pt];
\fill (2,0) circle[radius=2pt];
\fill (2,1) circle[radius=2pt];
\fill (2,2) circle[radius=2pt];
\fill (2,3) circle[radius=2pt];
\path[-] (0,0) edge [bend right=20] (0,2);
\path[-] (1,2) edge [bend right=20] (1,3);
\path[-] (1,2) edge [bend left=20] (1,3);
\draw (0,3) -- (1,0) -- (2,2);
\path[-] (2,1) edge [bend left=20] (2,3);
\end{tikzpicture}
\end{center}
from Example \ref{comp-eg} is a double diagram. 
\end{eg}

\begin{defn}
For any of the algebras defined in Section \ref{alg-sec}, let $p$ be a right link state of an $n$-diagram. Let $d$ be an $n$-diagram. We define the \emph{sesqui-diagram} $(p,d)$ to be the diagram formed by identifying the vertices $\ol{1},\dotsc , \ol{n}$ in $p$ with the vertices $1,\dotsc , n$ in $d$. After performing this identification, we relabel the vertices. The left-hand column of vertices will be labelled by $1^{\prime},\dotsc ,n^{\prime}$ and we preserve the labels $\ol{1},\dotsc , \ol{n}$ on the right-hand column.  
\end{defn}

\begin{eg}
Consider the right link state $p$ and the diagram $d$ (as walled Brauer $(2+2)$-diagrams) below:
\begin{center}
    \begin{tikzpicture}
\fill (0,0) circle[radius=2pt];
\fill (0,1) circle[radius=2pt];
\fill (0,2) circle[radius=2pt];
\fill (0,3) circle[radius=2pt];
\draw (0,1) -- (-1,1);
\draw (0,2) -- (-1,2);
\path[-] (0,0) edge [bend left=20] (0,3);

\draw[dashed] (-2,1.5) -- (5,1.5);

\fill (2,0) circle[radius=2pt];
\fill (2,1) circle[radius=2pt];
\fill (2,2) circle[radius=2pt];
\fill (2,3) circle[radius=2pt];
\fill (3,0) circle[radius=2pt];
\fill (3,1) circle[radius=2pt];
\fill (3,2) circle[radius=2pt];
\fill (3,3) circle[radius=2pt];
\draw (2,3) -- (3,3);
\draw (2,0) -- (3,0);
\path[-] (2,1) edge [bend right=20] (2,2);
\path[-] (3,1) edge [bend left=20] (3,2);

\end{tikzpicture}
\end{center}

The sesqui-diagram $(p,d)$ is drawn below:
\begin{center}
    \begin{tikzpicture}
\fill (0,0) circle[radius=2pt];
\fill (0,1) circle[radius=2pt];
\fill (0,2) circle[radius=2pt];
\fill (0,3) circle[radius=2pt];
\draw (0,1) -- (-1,1);
\draw (0,2) -- (-1,2);
\path[-] (0,0) edge [bend left=20] (0,3);

\draw[dashed] (-2,1.5) -- (3,1.5);

\fill (0,0) circle[radius=2pt];
\fill (0,1) circle[radius=2pt];
\fill (0,2) circle[radius=2pt];
\fill (0,3) circle[radius=2pt];
\fill (1,0) circle[radius=2pt];
\fill (1,1) circle[radius=2pt];
\fill (1,2) circle[radius=2pt];
\fill (1,3) circle[radius=2pt];
\path[-] (1,1) edge [bend left=20] (1,2);
\draw (0,0) -- (1,0);
\path[-] (0,1) edge [bend right=20] (0,2);
\draw (0,3) -- (1,3);
\end{tikzpicture}
\end{center}
\end{eg}

\section{Cohomology of diagram algebras}
\label{Boyde-sec}
In this section we prove the cohomological analogues of \cite[Theorem 4.3]{Boyde} and \cite[Corollary 4.4]{Boyde} and provide an application in the case of subalgebras of the rook-Brauer algebras.

\begin{thm}
\label{Ext-thm}
Let $A$ be an associative $k$-algebra. Let $M$ and $N$ be right $A$-modules. Suppose that $I$ is a two-sided ideal of $A$ such that
\begin{itemize}
\item $I$ acts trivially on $M$ and $N$ and
\item as a left ideal $I$ is isomorphic to a direct sum of left ideals $J_1\oplus \cdots \oplus J_p$ where each $J_i$ is generated as a left ideal by finitely many commuting idempotents.
\end{itemize} 
Then there is an isomorphism of graded $k$-modules
\[\Ext_A^{\star}(M,N) \cong \Ext_{A/I}^{\star}(M,N).\]
\end{thm}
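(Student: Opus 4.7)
The plan is to reduce this to the Cartan-Eilenberg change-of-rings spectral sequence
\[E_2^{p,q}=\Ext_{A/I}^{p}\!\left(M,\Ext_{A}^{q}(A/I,N)\right)\Longrightarrow \Ext_{A}^{p+q}(M,N),\]
which is available because $A/I$ is a quotient $k$-algebra of $A$. If one can establish that $\Ext_{A}^{q}(A/I,N)=0$ for all $q\geqslant 1$, then the spectral sequence degenerates at $E_2$ to the row $q=0$, and since $I$ acts trivially on $N$ we have $\Hom_{A}(A/I,N)=N$, giving exactly the desired isomorphism. The analogous reduction applies with $M$ in place of $N$ to keep symmetry with the $\Tor$ setting from \cite[Theorem 4.3]{Boyde}, but the cleanest path is through the vanishing of $\Ext_{A}^{q}(A/I,N)$.

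The long exact sequence induced by $0\to I\to A\to A/I\to 0$ converts this vanishing into the claim that $\Ext_{A}^{q}(I,N)=0$ for all $q\geqslant 0$; indeed $\Hom_A(I,N)=0$ handles $\Ext^1$ via the connecting morphism, and $\Ext^{q-1}_A(I,N)=0$ handles $\Ext^q$ for $q\geqslant 2$, using $\Ext_A^{\geqslant 1}(A,N)=0$. The decomposition $I\cong J_1\oplus\cdots\oplus J_p$ of left $A$-modules reduces us to showing $\Ext_A^{\star}(J_i,N)=0$ for each $i$.

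The heart of the argument, and what I expect to be the main obstacle, is the construction of a projective resolution of $J_i$ by summands of $A$. Writing $J_i=Ae_{1}+\cdots+Ae_{k}$ for commuting idempotents $e_1,\ldots,e_k$, the commutativity guarantees that $Ae_S:=Ae_{i_1}\cdots e_{i_r}$ is independent of the ordering and that $Ae_S\cap Ae_T=Ae_{S\cup T}$. One therefore expects a Koszul-type (inclusion-exclusion) resolution
\[\cdots\longrightarrow \bigoplus_{|S|=2}Ae_S\longrightarrow \bigoplus_{|S|=1}Ae_S\longrightarrow J_i\longrightarrow 0,\]
with alternating-sign differentials modelled on the nerve of the cover of $J_i$ by the $Ae_j$. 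I will verify exactness by induction on $k$: writing $J_i=J_i'+Ae_k$ with $J_i'=Ae_1+\cdots+Ae_{k-1}$, the intersection $J_i'\cap Ae_k=Ae_1e_k+\cdots+Ae_{k-1}e_k$ is again generated by commuting idempotents, and splicing the inductive resolution into the Mayer-Vietoris sequence $0\to J_i'\cap Ae_k\to J_i'\oplus Ae_k\to J_i\to 0$ produces the stated complex.

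Granted this resolution, every term is of the form $Ae$ for an idempotent $e\in I$, so $\Hom_A(Ae,N)=eN$, and since $I$ acts trivially on $N$ we have $eN=0$. Hence $\Hom_A(-,N)$ kills the resolution, giving $\Ext_A^{\star}(J_i,N)=0$ and completing the vanishing step. The remaining work is bookkeeping: assemble the vanishing across the summands $J_i$, feed it through the long exact sequence to obtain $\Ext_A^{\geqslant 1}(A/I,N)=0$, and invoke the spectral sequence to conclude. The only delicate point is checking that the candidate Koszul differentials are genuinely $A$-linear and that the squared differential vanishes; this is where the commutativity of the idempotents is essential and where I anticipate the bulk of the technical effort.
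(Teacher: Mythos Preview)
Your argument has a sidedness mismatch that makes the route, as written, break down. The spectral sequence you quote needs $\Ext_A^{q}(A/I,N)$ with $A/I$ and $N$ as \emph{right} $A$-modules, and your long exact sequence then reduces to $\Ext_A^{q}(I,N)=0$ with $I$ a right module. But the hypothesis decomposes $I\cong\bigoplus J_i$ as \emph{left} $A$-modules, and your Koszul-type resolution is by terms $Ae$, which are projective \emph{left} modules; the identification $\Hom_A(Ae,N)\cong eN$ is likewise a left-module statement. Nothing in the hypothesis controls $I$ as a right module, so the vanishing $\Ext_A^{\star}(J_i,N)=0$ is not available on the side you need. (Incidentally, the Koszul resolution is unnecessary even on the left: commuting idempotents $e_1,\dots,e_k$ give a single idempotent $e=1-\prod_j(1-e_j)$ with $J_i=Ae$, so $J_i$ is already a summand of $A$.)

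The paper fixes this by working on the other side of the adjunction. Take a free right $A$-resolution $F_\star\to M$; since $I$ kills $N$, extension/restriction gives $\Hom_A(F_\star,N)\cong\Hom_{A/I}(F_\star\otimes_A(A/I),N)$. Now $F_\star\otimes_A(A/I)$ uses the \emph{left} $A$-module structure of $A/I$, and its homology is $\Tor^A_\star(M,A/I)$. Projectivity of $I$ on the left plus $M\otimes_A I=0$ (each $J_i=Ae$ with $e\in I$, so $m\otimes ae=mae\otimes e=0$) forces $\Tor^A_{\geqslant 1}(M,A/I)=0$, so $F_\star\otimes_A(A/I)$ is a free $A/I$-resolution of $M$ and both $\Ext$-groups are the cohomology of the same complex. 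If you want to keep a spectral-sequence framing, replace your $E_2^{p,q}=\Ext_{A/I}^{p}(M,\Ext_A^{q}(A/I,N))$ by the base-change form $E_2^{p,q}=\Ext_{A/I}^{p}(\Tor^A_{q}(M,A/I),N)$; that one degenerates using exactly the left-module hypothesis you have.
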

\begin{proof}
We will show that $\Ext_A^{\star}(M,N)$ and $\Ext_{A/I}^{\star}(M,N)$ are the cohomology of the same cochain complex.

Let $F_{\star}$ be a free resolution of $M$ by right $A$-modules. We know that $\Ext_A^{\star}(M,N)$ is the cohomology of the cochain complex $\Hom_A(F_{\star},N)$.

On the other hand, since $I$ acts trivially on $N$, we have an isomorphism of cochain complexes
\[\Hom_A(F_{\star},N) \cong \Hom_{A/I}(F_{\star}\otimes_A (A/I) , N),\]
by extension and restriction of scalars.

As demonstrated in the proof of \cite[Theorem 4.3]{Boyde}, $F_{\star}\otimes_A (A/I)$ is a resolution of $M$ by free right $(A/I)$-modules. This uses the commuting idempotents in the statement and the fact that $I$ acts trivially on $M$. 

As shown in \cite[Lemma 4.1]{Boyde}, if $e \in A$ is idempotent then the left ideal $Ae$ is a projective left $A$-module because there is a split short exact sequence
$$
0 \xrightarrow{} Ae \xrightarrow{} A \xrightarrow{\cdot (1-e)} A(1-e) \xrightarrow{} 0.
$$
Then \cite[Corollary 4.2]{Boyde} shows by induction that a left ideal of $A$ generated by finitely many commuting idempotents is in fact generated by a single idempotent, and hence is a projective left $A$-module.

Each $J_i$ is then a projective left $A$-module since it is generated as a left ideal of $A$ by finitely many commuting idempotents, and therefore $I$ is a projective left $A$-module as it is a direct sum of projective left $A$-modules. This yields a projective resolution $0\rightarrow I \hookrightarrow A \rightarrow 0$ of $A/I$ by projective right $A$-modules. 

The fact that $I$ acts trivially on $M$ allows one to deduce that when we tensor over $A$ with $M$ we obtain $M\otimes_A I=0$. 

Combining these two things we see that $\Tor_{\star}^A(M,A/I)=0$ for $\star\geqslant 1$ and $\Tor_{0}^A(M,A/I)=M$ as required. Therefore the cohomology of the right-hand side, $\Ext_{A/I}^{\star}(M,N)$, is isomorphic to $\Ext_A^{\star}(M,N)$ as required.
\end{proof}

The theorem has two important corollaries. The first covers the case where $I$ itself is generated as a left ideal by finitely many commuting idempotents. This follows immediately from the theorem and we state it without proof. The second covers the case where we have a chain of inclusions of two-sided ideals and each successive quotient of ideals satisfies the conditions of the theorem.

\begin{cor}
\label{Ext-cor-1}
Let $A$ be an associative $k$-algebra. Let $M$ and $N$ be right $A$-modules. Suppose that $I$ is a two-sided ideal of $A$ such that $I$ acts trivially on $M$ and $N$ and that $I$ is generated as a left ideal by finitely many commuting idempotents. Then there is an isomorphism of graded $k$-modules
\[\Ext_A^{\star}(M,N) \cong \Ext_{A/I}^{\star}(M,N).\]    
\end{cor}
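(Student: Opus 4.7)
The plan is to derive this corollary as an immediate specialization of Theorem \ref{Ext-thm}. To apply that theorem we need two things: first, that $I$ acts trivially on both $M$ and $N$; and second, that $I$ decomposes as a direct sum $J_1 \oplus \cdots \oplus J_p$ of left ideals, with each $J_i$ generated as a left ideal by finitely many commuting idempotents. The first condition is assumed verbatim in the corollary. For the second, I would simply take $p = 1$ and $J_1 = I$, so that the decomposition reduces to the tautology $I = I$; the corollary's hypothesis is then precisely the statement that this single summand is generated as a left ideal by finitely many commuting idempotents. With both hypotheses of Theorem \ref{Ext-thm} verified, its conclusion yields the desired isomorphism
\[\Ext_A^{\star}(M,N) \cong \Ext_{A/I}^{\star}(M,N).\]

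Because this is a tautological specialization, there is no technical obstacle to overcome; the corollary is flagged separately only because, in the applications to diagram algebras that follow, one typically identifies a single ideal (for instance an ideal spanned by diagrams having few propagating edges) together with a presentation of it by commuting idempotents, and this one-summand form is the most convenient statement to invoke. If one wanted to sanity-check the deduction, it would suffice to revisit the proof of Theorem \ref{Ext-thm} and observe that the direct-sum structure entered only to ensure that $I$ is a projective left $A$-module, which is automatic in the single-summand case.
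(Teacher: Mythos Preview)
Your proposal is correct and matches the paper's own treatment: the paper states that this corollary ``follows immediately from the theorem'' and gives no further proof, which is exactly your specialization $p=1$, $J_1=I$. There is nothing to add.
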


\begin{cor}
\label{Ext-cor}
Let $A$ be a unital, associative $k$-algebra. Let $M$ and $N$ be right $A$-modules. Let $0\leqslant l \leqslant m$. Suppose that we have a chain of two-sided ideals
\[0=I_{-1}\leqslant I_0\leqslant I_1 \leqslant \cdots \leqslant I_m\leqslant A,\]
such that
\begin{itemize}
\item each $I_j$ acts trivially on $M$ and $N$;
\item for $i\geqslant l$ there is an isomorphism of left $A$-modules
\[\frac{I_i}{I_{i-1}}\cong \frac{J_{i,1}}{I_{i-1}}\oplus \cdots \oplus \frac{J_{i,p_i}}{I_{i-1}},\]
for left ideals $J_{i,j}$ generated by finitely many commuting idempotents.
\end{itemize}
Then there exist isomorphisms of graded $k$-modules
\[\Ext_{A/I_{l-1}}^{\star}(M,N) \cong \Ext_{A/I_{l}}^{\star}(M,N) \cong \cdots \cong \Ext_{A/I_{m}}^{\star}(M,N).\]
\end{cor}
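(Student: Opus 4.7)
The plan is to apply Theorem \ref{Ext-thm} iteratively, one step at a time along the chain of ideals. Specifically, for each $i$ with $l \leqslant i \leqslant m$, I would apply Theorem \ref{Ext-thm} to the quotient algebra $B_i := A/I_{i-1}$ equipped with the two-sided ideal $\widetilde{I}_i := I_i/I_{i-1}$. Since $(A/I_{i-1})/(I_i/I_{i-1}) \cong A/I_i$ by the standard third isomorphism theorem, the output of Theorem \ref{Ext-thm} in this setting is an isomorphism of graded $k$-modules
\[\Ext^{\star}_{A/I_{i-1}}(M,N) \cong \Ext^{\star}_{A/I_i}(M,N).\]
Concatenating these isomorphisms across $i = l, l+1, \ldots, m$ produces the chain of isomorphisms claimed in the conclusion.

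To make each step go through, I must verify the hypotheses of Theorem \ref{Ext-thm} for the pair $(B_i, \widetilde{I}_i)$. Since $I_i$ acts trivially on $M$ and $N$ by assumption and $I_{i-1} \subseteq I_i$, the quotient $\widetilde{I}_i$ also acts trivially on $M$ and $N$, giving well-defined $B_i$-module structures. The decomposition $I_i/I_{i-1} \cong \bigoplus_{j=1}^{p_i} J_{i,j}/I_{i-1}$ in the hypothesis is given as an isomorphism of left $A$-modules; since $I_{i-1}\cdot I_i \subseteq I_{i-1}$, the left $A$-action on $I_i/I_{i-1}$ factors through $B_i$, so this is in fact a decomposition of left $B_i$-ideals. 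Finally, given commuting idempotents $e_1, \ldots, e_r \in A$ that generate $J_{i,j}$ as a left ideal of $A$, their images $\bar{e}_1, \ldots, \bar{e}_r \in B_i$ remain commuting idempotents and generate $J_{i,j}/I_{i-1}$ as a left ideal of $B_i$.

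The main obstacle is mild and essentially bookkeeping: confirming that every ingredient of Theorem \ref{Ext-thm} -- the two-sided ideal structure, the trivial action, the direct sum decomposition of left ideals, and the commuting idempotent generators -- descends cleanly along the quotient map $A \twoheadrightarrow A/I_{i-1}$. Once these checks are in place, the corollary is immediate by transitivity from the iterated application of Theorem \ref{Ext-thm}.
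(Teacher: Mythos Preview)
Your proposal is correct and follows essentially the same approach as the paper: apply Theorem \ref{Ext-thm} to the image of $I_i$ inside $A/I_{i-1}$ for each $i$ in the range $l \leqslant i \leqslant m$, then concatenate the resulting isomorphisms. Your write-up is in fact more detailed than the paper's, which simply asserts that the hypotheses of Theorem \ref{Ext-thm} are satisfied in each quotient without spelling out the descent of the idempotents, the trivial action, or the left-ideal decomposition.
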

\begin{proof}
Starting with the given chain of two-sided ideals and $l\geqslant 0$, the conditions in the statement tell us that for each $i\geqslant l$, the conditions of Theorem \ref{Ext-thm} are satisfied for the image of each two-sided ideal $I_i$ inside the algebra $A/I_{i-1}$. Theorem \ref{Ext-thm} therefore gives the required chain of isomorphisms.
\end{proof}

\begin{thm}
\label{technical-thm}
Let $0\leqslant l\leqslant m\leqslant n-1$. Let $A$ be a subalgebra of $\mathcal{RB}_n(\delta , \varepsilon)$ such that
\begin{itemize}
\item as a $k$-module, $A$ is free on a subset of the rook-Brauer diagrams and
\item for $i$ in the range $l\leqslant i \leqslant m$, for each right link state $p\in P_i$,  if $A$ contains at least one diagram with right link state $p$, then $A$ contains an idempotent $e_p$ such that in $A$ we have an equality of left ideals $A\cdot e_p = A\cap J_p$.
\end{itemize}
Then we have a chain of isomorphisms 
\[\Ext_{A/\left(A\cap I_{l-1}\right)}^{\star}\left(\mathbbm{1} , \mathbbm{1}\right) \cong \Ext_{A/\left(A\cap I_{l}\right)}^{\star}\left(\mathbbm{1} , \mathbbm{1}\right)\cong \cdots \cong \Ext_{A/\left(A\cap I_{m}\right)}^{\star}\left(\mathbbm{1} , \mathbbm{1}\right) .\]
\end{thm}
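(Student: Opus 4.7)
The plan is to apply Corollary \ref{Ext-cor} with $M = N = \mathbbm{1}$ to the chain of two-sided ideals
\[0 = A \cap I_{-1} \leqslant A \cap I_0 \leqslant \cdots \leqslant A \cap I_m \leqslant A\]
inside $A$. Each $A \cap I_j$ is a two-sided ideal of $A$ because $I_j$ is two-sided in $\mathcal{RB}_n(\delta,\varepsilon)$; moreover, for $j \leqslant n-1$ the ideal $I_j$ is spanned by non-permutation diagrams, hence lies in the kernel of the augmentation $\tau$, so $A \cap I_j$ acts trivially on $\mathbbm{1}$. This takes care of the first bullet of Corollary \ref{Ext-cor}.

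The heart of the argument is verifying the second bullet: for each $i$ with $l \leqslant i \leqslant m$ I must exhibit a decomposition
\[\frac{A \cap I_i}{A \cap I_{i-1}} \;\cong\; \bigoplus_{j} \frac{J_{i,j}}{A \cap I_{i-1}}\]
where each $J_{i,j}$ is a left ideal of $A$ generated by finitely many commuting idempotents. First I would introduce $P_i^A \subseteq P_i$, the set of right link states $p$ such that $A$ contains at least one diagram with right link state $p$. Since $A$ is free as a $k$-module on a subset of rook-Brauer diagrams and every diagram with exactly $i$ propagating edges has a unique right link state in $P_i$, partitioning diagrams by their right link state yields a $k$-module decomposition
\[\frac{A \cap I_i}{A \cap I_{i-1}} \;\cong\; \bigoplus_{p \in P_i^A} M_p,\]
where $M_p$ denotes the $k$-span of those diagrams in $A$ whose right link state is exactly $p$. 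For each $p \in P_i^A$ I would take $J_{i,p} = A \cdot e_p + (A \cap I_{i-1})$, whose image in $A/(A \cap I_{i-1})$ is the left ideal generated by the single idempotent $\bar{e}_p$. Because splices and deletions never increase the number of defects, $J_p \subseteq I_i$, and the diagrams in $J_p$ with exactly $i$ propagating edges are precisely those of right link state $p$; combined with the hypothesis $A \cdot e_p = A \cap J_p$ this identifies $J_{i,p}/(A \cap I_{i-1})$ with $M_p$. The two decompositions then coincide summand by summand, and Corollary \ref{Ext-cor} delivers the desired chain of isomorphisms.

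The main obstacle --- which is really careful bookkeeping rather than a deep difficulty --- is confirming that the image of $A \cdot e_p$ in the quotient picks out \emph{exactly} the diagrams with right link state $p$, uncluttered by diagrams of strictly fewer propagating edges. This rests on the containment $J_p \subseteq I_i$ and on the observation that the diagrams of $J_p$ with fewer than $i$ propagating edges already lie in $I_{i-1}$; both are structural consequences of the definitions recalled in Section \ref{link-state-sec} together with the freeness of $A$ on diagrams. The hypothesis that each $e_p$ lies in $A$ (rather than merely in $\mathcal{RB}_n(\delta,\varepsilon)$) is what allows the argument to be run \emph{inside} $A$ rather than in the ambient rook-Brauer algebra, and is precisely why this theorem will be the workhorse for subsequent subalgebras (Brauer, Temperley-Lieb, Motzkin, etc.) in Section \ref{RB-sec}.
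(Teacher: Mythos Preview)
Your proposal is correct and follows essentially the same route as the paper: apply Corollary~\ref{Ext-cor} to the chain $0 = A\cap I_{-1} \leqslant \cdots \leqslant A\cap I_m \leqslant A$, observe that each $A\cap I_j$ kills $\mathbbm{1}$ because it is spanned by non-permutation diagrams, and decompose each successive quotient $(A\cap I_i)/(A\cap I_{i-1})$ over right link states $p\in P_i$ using the hypothesis $A\cdot e_p = A\cap J_p$. The only cosmetic difference is that the paper takes the summands to be $(A\cap J_p)/(A\cap I_{i-1})$ directly (so that $J_{i,p} = A\cap J_p = A\cdot e_p$ is visibly generated by the single idempotent $e_p$, matching the literal wording of Corollary~\ref{Ext-cor}), whereas you pad by $A\cap I_{i-1}$; your version is harmless since the images in the quotient agree, but the paper's choice avoids having to argue separately that the enlarged ideal is idempotent-generated.
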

\begin{proof}
The proof of this theorem follows the same method as \cite[Theorem 1.11]{Boyde}, given in Section 6 of that paper. We will apply Corollary \ref{Ext-cor} to the chain of two-sided ideals 
\[0=A\cap I_{-1} \leqslant A \cap I_0 \leqslant \cdots \leqslant A \cap I_m \leqslant A.\]

We note that each of these two-sided ideals acts as multiplication by $0\in k$ on $\1$ since the diagrams spanning these ideals have fewer than $n$ propagating edges. Therefore the first condition of Corollary \ref{Ext-cor} is satisfied.

For the second condition, since each $I$ is free as a $k$-module on some basis of diagrams and since every diagram has some right link state, we can write
\[\frac{A \cap I_{i}}{A\cap I_{i-1}} = \sum_{p\in P_i} \frac{A \cap J_p}{A\cap I_{i-1}},\]
where we recall that the set $P_i$ consists of the right link states of the diagrams that span the quotient $I_i/I_{i-1}$. The fact that the sum is direct follows by noting that, if we have two distinct elements in $P_i$, say $p$ and $q$, then $J_p\cap J_q\subset I_{i-1}$. Since $p$ and $q$ are distinct, an element in $J_p\cap J_q$ must be obtained from both $p$ and $q$ by at least one splice or deletion (depending on which operations are allowed in $A$). In either case, this yields a diagram in $I_{i-1}$ since the number of defects is reduced. 
\end{proof}

\section{Cohomological analogues of known results}
\label{known-results-sec}

In this section, we collect some cohomological analogues of known results on the homology of Brauer algebras, Temperley--Lieb algebras and rook algebras. 

\begin{prop}
\label{Brauer-prop}
If $\delta \in k$ is invertible or if $n$ is odd, then there is an isomorphism of graded $k$-modules $\Ext_{\mathcal{B}_n(\delta)}^{\star}\left(\mathbbm{1},\mathbbm{1}\right) \cong H^{\star}\left(\Sigma_n , \1\right)$.
\end{prop}
\begin{proof}
    We apply Theorem \ref{technical-thm} with $l=0$ and $m=n-1$ to obtain this result. The chain of isomorphisms tells us that the cohomology of 
    \[\mathcal{B}_n(\delta)/(\mathcal{B}_n(\delta)\cap I_{-1})=\mathcal{B}_n(\delta)\]
    is isomorphic to the cohomology of \[\mathcal{B}_n(\delta)/(\mathcal{B}_n(\delta)\cap I_{n-1})\cong k[\Sigma_n],\]
    whence the result. The Brauer algebra is free on the basis of Brauer diagrams. The existence of the necessary idempotents was shown by Boyde: \cite[Lemma 7.1]{Boyde} constructs the necessary idempotent when $\delta$ is invertible; \cite[Section 8]{Boyde} describes how to construct the necessary idempotent when $n$ is odd. The latter case uses the notions of double diagram and sesqui-diagram recalled in Subsection \ref{sesqui-subsec} and we adapt these methods to study the walled Brauer algebras in Section \ref{WB-sec}. 
\end{proof}

\begin{prop}
If $n$ is odd, then the graded $k$-module $\Ext_{\mathcal{TL}_n(\delta)}^{\star}\left(\mathbbm{1},\mathbbm{1}\right)$ is isomorphic to $k$ concentrated in degree zero.
\end{prop}
\begin{proof}
    The proof is similar to Proposition \ref{Brauer-prop}. We apply Theorem \ref{technical-thm}, except in this case the chain of isomorphisms tells us that the cohomology of $\mathcal{TL}_{n}(\delta)/(\mathcal{TL}_n(\delta)\cap I_{-1})=\mathcal{TL}_{n}(\delta)$ is isomorphic to the cohomology of $\mathcal{TL}_{n}(\delta)/(\mathcal{TL}_n(\delta)\cap I_{n-1})=k$. We can apply the theorem because the Temperley--Lieb algebras are free on the basis of Temperley--Lieb algebras and Boyde has constructed the necessary idempotents in \cite[Section 8]{Boyde}.
\end{proof}

\begin{prop}
Let $\varepsilon \in k$ be invertible. There exists an isomorphism of graded $k$-modules \[\Ext_{\mathcal{R}_n(\varepsilon)}^{\star}\left(\mathbbm{1},\mathbbm{1}\right) \cong H^{\star}\left(\Sigma_n , \1\right).\]
Furthermore, both the graded $k$-modules $\Tor_{\star}^{\mathcal{PR}_n(\varepsilon)}\left(\mathbbm{1},\mathbbm{1}\right)$ and $\Ext_{\mathcal{PR}_n(\varepsilon)}^{\star}\left(\mathbbm{1},\mathbbm{1}\right)$ are isomorphic to a copy of $k$ concentrated in degree zero.
\end{prop}
\begin{proof}
    In the introduction we recalled the diagrams $\rho_i$, defined in \cite[Section 5]{Boyde}. The diagram $\rho_i$ is formed by taking the rook-Brauer $n$-diagram with $n$ horizontal edges and omitting the $i^{\mathrm{th}}$ edge. In particular, each $\rho_i$ lies in the rook algebra $\mathcal{R}_n(\varepsilon)$ and in the planar rook algebra $\mathcal{PR}_n(\varepsilon)$.  
    
    Combining Lemmas 5.2 and 5.3 with the proof of Theorem 5.4 in \cite{Boyde}, we know that the elements $\varepsilon^{-1}\rho_i$ are idempotent and that they generate the two-sided ideal $\mathcal{R}_n(\varepsilon)\cap I_{n-1}$ as a left ideal. The statement for the rook algebras now follows from Corollary \ref{Ext-cor-1} by noting that $\mathcal{R}_n(\varepsilon)/(\mathcal{R}_n(\varepsilon)\cap I_{n-1}) \cong k[\Sigma_n]$.

    The proof for the planar rook algebras is similar. Using Boyde's methods one checks that the two-sided ideal $\mathcal{PR}_n(\varepsilon)\cap I_{n-1}$ is generated as a left ideal by the elements $\varepsilon^{-1}\rho_i$. We then apply Corollary \ref{Ext-cor-1}, noting that \[\mathcal{PR}_n(\varepsilon)/(\mathcal{PR}_n(\varepsilon)\cap I_{n-1}) \cong k\]
    since the only permutation diagram in $\mathcal{PR}_n(\varepsilon)$ is the identity diagram.
\end{proof}

\section{Cohomology of rook-Brauer algebras and Motzkin algebras}
\label{RB-M-sec}

In this section, we prove that if $\varepsilon$ is invertible then the (co)homology of the rook-Brauer algebra $\mathcal{RB}_n(\delta,\varepsilon)$ is isomorphic to the (co)homology of the symmetric group $\Sigma_n$ and that the (co)homology of the Motzkin algebras vanishes in positive degrees.

\begin{defn}
Let $p\in P_i$ be a right link state of a rook-Brauer $n$-diagram. Let $d_p$ be the rook-Brauer $n$-diagram defined as follows:
\begin{itemize}
    \item if $\ol{i}$ is an isolated vertex in $p$, then $i$ and $\ol{i}$ are isolated in $d_p$;
    \item if $\ol{i}$ and $\ol{j}$ are connected by a non-propagating edge in $p$, then they are also connected by a non-propagating edge in $d_p$;
    \item if there is a defect at $\ol{i}$ in $p$ then there is a propagating edge between $i$ and $\ol{i}$ in $d_p$;
    \item there are no non-propagating edges in the left-hand column of $d_p$.
\end{itemize}
\end{defn}

\begin{lem}
\label{RB-lem}
Let $p\in P_i$ be a right link state of a rook-Brauer $n$-diagram. Let $\alpha$ be the number of isolated vertices in $p$ and let $\beta$ be the number of non-propagating edges in $p$. Then, for any $y\in J_p$, we have $yd_p=\varepsilon^{\alpha+\beta}y$.    
\end{lem}
\begin{proof}
We will check the following equivalent statement: the double diagram $y\ast d_p$ has all the vertex pairings and isolated vertices of $y$ with no loops and precisely $\alpha+\beta$ contractible components in the middle column.

Any non-propagating edge or isolated vertex in the left-hand column of $y$ appears in the left-hand column of $y\ast d_p$.

Suppose $y$ has a propagating edge from $i$ to $\ol{j}$. Therefore the double diagram $y\ast d_p$ has an edge from $i$ to $j^{\prime}$. A propagating edge in $y$ leads to a defect in its right link state at $\ol{j}$. Since $y\in J_p$, its right link state is obtained from $p$ by a sequence of splices and deletions. Therefore $p$ must have a defect at $\ol{j}$. By definition of $d_p$, there is an edge from $j^{\prime}$ to $\ol{j}$ in the double diagram $y\ast d_p$. In other words $i$ and $\ol{j}$ are connected in the double diagram as required.

Suppose $y$ has isolated vertices in the right-hand column. There are precisely $\alpha$ of these that are isolated in $p$, giving a factor of $\varepsilon^{\alpha}$ in the product. Any other isolated vertex in the right-hand column of $y$ is the result of a deletion of a defect in its right link state. By definition of $d_p$, there is an edge joining such an isolated vertex to the right-hand column in the double diagram. This leads to the vertex being isolated in the product without generating any extra factors of $\varepsilon$.

Suppose $y$ has a non-propagating edge in its right-hand column. There are precisely $\beta$ of these that appear in $p$, giving a factor of $\varepsilon^{\beta}$ in the product. Any other non-propagating edge in the right-hand column of $y$ is the result of a splice in its right link state. By definition of $d_p$, there are edges joining the two vertices to the right-hand column in the double diagram. This leads to the non-propagating edge existing in the right-hand column of the product.

Finally, in order to form a loop, we would need non-propagating edges in the left-hand column of $d_p$ but, by definition, there are none.

Therefore, the double diagram $y\ast d_p$ has all the vertex pairings and isolated vertices of $y$ with no loops and precisely $\alpha+\beta$ contractible components in the middle column, as required.
\end{proof}

\begin{thm}
\label{RB-thm}
Let $\varepsilon$ be invertible. For any $\delta$, there exist isomorphisms of graded $k$-modules
\[\Tor_{\star}^{\mathcal{RB}_n(\delta,\varepsilon)}(\1 ,\1) \cong H_{\star}(\Sigma_n ,\1) \quad \text{and} \quad \Ext_{\mathcal{RB}_n(\delta,\varepsilon)}^{\star}(\1 ,\1) \cong H^{\star}(\Sigma_n ,\1).\]
\end{thm}
\begin{proof}
We apply Theorem \ref{technical-thm} with $l=0$ and $m=n-1$. As a $k$-module, the rook-Brauer algebra is free on a subset of rook-Brauer diagrams (all of them, in fact).

Let $e_p=\varepsilon^{-(\alpha+\beta)}d_p$. Lemma \ref{RB-lem} implies that $e_p$ is idempotent. It also implies that for any $y\in J_p$, $ye_p= \varepsilon^{-(\alpha+\beta)}yd_p=y$ and so $J_p\subseteq A\cdot e_p$. The reverse inclusion, $A\cdot e_p\subseteq J_p$, holds since $e_p$ has right link state $p$ by construction. 
\end{proof}

\begin{thm}
\label{Motzkin-thm}
Let $\varepsilon$ be invertible. Then for any $\delta$, the graded $k$-modules $\Tor_{\star}^{\mathcal{M}_n(\delta,\varepsilon)}(\1 ,\1)$ and $\Ext_{\mathcal{M}_n(\delta,\varepsilon)}^{\star}(\1 ,\1)$ are isomorphic to $k$ concentrated in degree zero.
\end{thm}
\begin{proof}
The method for the Motzkin algebras is exactly the same as for the rook-Brauer algebras. The key point to note is that if we start with a right link state of a Motzkin diagram, then the diagram $d_p$ will be a Motzkin diagram.    
\end{proof}

\section{Cohomology of walled Brauer algebras}
\label{WB-sec}

We start by showing that the (co)homology of the walled Brauer algebra $\mathcal{B}_{r,s}(\delta)$ is isomorphic to the cohomology of $\Sigma_r\times \Sigma_s$ when the parameter $\delta$ is invertible. We recall the definition of $k$-free idempotent left cover together with a related result on (co)homology. We then construct a $k$-free idempotent left cover of the two-sided ideal $I_{r+s-1}$. We use this to show that the (co)homology of the walled Brauer algebra $\mathcal{B}_{r,s}(\delta)$ is isomorphic to the (co)homology of $\Sigma_r\times \Sigma_s$ when $r\neq s$ for any $\delta \in k$ and that such an isomorphism holds in a range when $r=s$. We show that this range is sharp in the case $r=s=1$.

\subsection{Invertible parameter}

We begin with the case where the parameter $\delta$ is invertible.

\begin{defn}
\label{dp-defn}
Fix $p\in P_i$, a right link state in $\mathcal{B}_{r,s}(\delta)$ with precisely $i$ defects. Define a diagram $d_p\in \mathcal{B}_{r,s}(\delta)$ as follows:
\begin{itemize}
    \item if there is a non-propagating edge between $\ol{a}$ and $\ol{b}$ in $p$ then there is a non-propagating edge between $a$ and $b$ and a non-propagating edge between $\ol{a}$ and $\ol{b}$ in $d_p$;
    \item if there is a defect at vertex $\ol{a}$ in $p$, then there is a propagating edge between $a$ and $\ol{a}$ in $d_p$.
\end{itemize}
\end{defn}

\begin{lem}
\label{WB-diag-lem}
Fix $p\in P_i$, a right link state in $\mathcal{B}_{r,s}(\delta)$ with precisely $i$ defects. If $y$ is a diagram in $\mathcal{B}_{r,s}(\delta) \cap J_p$, then $yd_p=\delta^{\alpha}y$, where $\alpha$ is the number of non-propagating edges in $p$.
\end{lem}
\begin{proof}
In order for $y$ to lie in $\mathcal{B}_{r,s}(\delta) \cap J_p$, the right link state of $y$ must be obtained from $p$ by a (possibly empty) sequence of splices. We note the following facts:
\begin{itemize}
    \item if $\ol{a}$ and $\ol{b}$ are connected by a non-propagating edge in $p$ then they are also connected by a non-propagating edge in $y$;
    \item if there is a defect at vertex $\ol{a}$ in $p$ then there are two possibilities:
    \begin{itemize}
        \item the vertex $\ol{a}$ is part of a propagating edge in $y$ (which results in a defect in its right link state);
        \item the vertex $\ol{a}$ is part of a non-propagating edge in the right-hand column of $y$ (which is the result of a splice).
    \end{itemize}
\end{itemize}
Now consider the double diagram $y\ast d_p$ (recalling the notation of Subsection \ref{sesqui-subsec}). We will show that if two vertices are connected in $y$ then they are connected in the double diagram $y\ast d_p$ and that there are precisely $\alpha$ loops in the middle column of $y\ast d_p$.

Any non-propagating edges in the left-hand column of $y$ are also present in the left-hand column of the double diagram $y\ast d_p$.

There are precisely $\alpha$ non-propagating edges that appear in both the right-hand column of $y$ and the right link state $p$. For each of these there is a corresponding non-propagating edge in the left-hand column of $d_p$ by construction. These form $\alpha$ loops in the middle column of $y\ast d_p$. Any other non-propagating edge in the right-hand column of $y$, say between the vertices $\ol{a}$ and $\ol{b}$, is the result of splicing two defects together. Since there were defects at $\ol{a}$ and $\ol{b}$ in the right link state $p$, there are edges from $a^{\prime}$ to $\ol{a}$ and from $b^{\prime}$ to $\ol{b}$ in the double diagram $y\ast d_p$. Combining these with the edge from $a^{\prime}$ to $b^{\prime}$ in the left-hand part of the double diagram $y\ast d_p$ formed by the splice, we see that $\ol{a}$ and $\ol{b}$ are connected in $y\ast d_p$.

Finally, if there is a propagating edge from $a$ to $\ol{b}$ in $y$ then there is an edge from $a$ to $b^{\prime}$ in the double diagram $y\ast d_p$. Since this propagating edge would lead to a defect in the right link state of $y$, there is an edge from $b^{\prime}$ to $\ol{b}$ in $y\ast d_p$ by the construction of $d_p$.

Combining all of this, we see that all the vertex pairings in $y$ are present in $y\ast d_p$ with precisely $\alpha$ loops in the middle column. Therefore, $yd_p=\delta^{\alpha}y$. 
\end{proof}

\begin{thm}
\label{walled-Brauer-thm}
Let $r$ and $s$ be non-negative integers. Let $\delta \in k$ be invertible. There exist isomorphisms of graded $k$-modules
\[\Tor_{\star}^{\mathcal{B}_{r,s}(\delta)}\left(\mathbbm{1},\mathbbm{1}\right) \cong H_{\star}\left(\Sigma_r\times \Sigma_s , \1\right) \text{ and }  \Ext_{\mathcal{B}_{r,s}(\delta)}^{\star}\left(\mathbbm{1},\mathbbm{1}\right) \cong H^{\star}\left(\Sigma_r\times \Sigma_s , \1\right).\]
\end{thm}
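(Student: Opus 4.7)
My plan is to apply Theorem \ref{technical-thm} (for the Ext statement) and \cite[Theorem 1.11]{Boyde} (for the Tor statement) to $A = \mathcal{B}_{r,s}(\delta)$, viewed as a subalgebra of $\mathcal{RB}_{r+s}(\delta,\varepsilon)$, with $l=0$ and $m=r+s-1$. The walled Brauer algebra is free over $k$ on the walled Brauer $(r+s)$-diagrams, so the freeness hypothesis is immediate. The quotient $A/(A \cap I_{r+s-1})$ is spanned by the permutation walled Brauer diagrams, which by the wall condition consist of a permutation of the first $r$ vertices together with an independent permutation of the last $s$ vertices; hence $A/(A \cap I_{r+s-1}) \cong k[\Sigma_r \times \Sigma_s]$, and once the two theorems above are applied the result will follow from the standard identifications $\Tor_{\star}^{k[G]}(\1,\1) \cong H_{\star}(G,\1)$ and $\Ext_{k[G]}^{\star}(\1,\1) \cong H^{\star}(G,\1)$.

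The core technical task is to supply, for each $0 \leqslant i \leqslant r+s-1$ and each right link state $p \in P_i$ realized by some walled Brauer diagram, an idempotent $e_p \in A$ with $A \cdot e_p = A \cap J_p$. I would take the same construction as in the ordinary Brauer case: let $d_p$ be the diagram of Definition \ref{dp-defn} (mirror the right link state across the middle axis to produce the left link state, and join matching defects by propagating edges). A walled Brauer right link state has no isolated vertices, and its non-propagating edges all cross the wall, so its mirror image does as well; defects at a top vertex $\ol{a}$ (i.e.\ $a \leqslant r$) produce a propagating edge $a$--$\ol{a}$ that stays above the wall, and symmetrically for bottom defects. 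Hence $d_p$ is itself a walled Brauer diagram. Setting $e_p = \delta^{-\alpha} d_p$, where $\alpha$ is the number of non-propagating edges in $p$, uses invertibility of $\delta$.

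Lemma \ref{RB-diag-lem} then does essentially all the work: since walled Brauer diagrams have no isolated vertices the $\varepsilon$-exponent $\beta$ vanishes, so for any $y \in A \cap J_p$ we obtain $y d_p = \delta^{\alpha} y$, hence $y e_p = y$ and $y \in A \cdot e_p$. Setting $y = d_p$ shows $e_p$ is idempotent, and the reverse inclusion $A \cdot e_p \subseteq A \cap J_p$ holds because $e_p \in A \cap J_p$. This verifies the hypothesis of Theorem \ref{technical-thm} for every relevant $p$, and the chain of isomorphisms
\[
\Ext_{A}^{\star}(\1,\1) \cong \Ext_{A/(A\cap I_0)}^{\star}(\1,\1) \cong \cdots \cong \Ext_{A/(A\cap I_{r+s-1})}^{\star}(\1,\1)
\]
gives the Ext identification; the Tor identification is identical with \cite[Theorem 1.11]{Boyde} in place of Theorem \ref{technical-thm}.

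The only genuinely new point beyond the Brauer case is the compatibility of the mirror construction with the wall: specifically, one must confirm that $d_p$ lies in $\mathcal{B}_{r,s}(\delta)$ and that the restricted splice operation allowed for walled Brauer link states (only defects on opposite sides of the wall may be spliced) still produces exactly the diagrams counted by $A \cap J_p$. Both amount to the same observation that walled Brauer link states, walled Brauer splices, and the mirroring operation all respect the wall, so I do not anticipate any obstruction; the main obstacle is really just this careful bookkeeping of which link states appear in $A$ and which splices remain valid.
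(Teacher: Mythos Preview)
Your proposal is correct and follows essentially the same approach as the paper: apply \cite[Theorem 1.11]{Boyde} and Theorem \ref{technical-thm} with $l=0$, $m=r+s-1$, and use the mirrored diagram $d_p$ of Definition \ref{dp-defn} (scaled by $\delta^{-\alpha}$) as the required idempotent, after checking that $d_p$ respects the wall. Your write-up is in fact more explicit than the paper's on why $d_p$ is a walled Brauer diagram and why the splice bookkeeping causes no trouble.
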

\begin{proof}
The walled Brauer algebra $\mathcal{B}_{r,s}(\delta)$ is a subalgebra of the rook-Brauer algebra $\mathcal{RB}_{r+s}(\delta , \varepsilon)$.

We will apply \cite[Theorem 1.11]{Boyde} for the homological statement and Theorem \ref{technical-thm} for the cohomological statement with $l=0$ and $m=r+s-1$.

We note that 
\[\frac{\mathcal{B}_{r,s}(\delta)}{\left(\mathcal{B}_{r,s}(\delta) \cap I_{-1}\right)} = \mathcal{B}_{r,s}(\delta)\quad \text{and} \quad \frac{\mathcal{B}_{r,s}(\delta)}{\left(\mathcal{B}_{r,s}(\delta) \cap I_{r+s-1}\right)}\cong k[\Sigma_r\times \Sigma_s].\] The latter holds since the quotient is spanned $k$-linearly by walled Brauer $(r+s)$-diagrams having $r+s$ propagating edges. The conditions of walled Brauer diagrams tells us that such a diagram consists of a permutation of the first $r$ vertices and a permutation of the last $s$ vertices.

The first condition of the theorem is satisfied since the walled Brauer algebra $\mathcal{B}_{r,s}(\delta)$ is free on a subset of rook-Brauer $(r+s)$-diagrams, namely the walled Brauer $(r+s)$-diagrams.

Let $e_p=\delta^{-\alpha}d_p$. Lemma \ref{WB-diag-lem}, with $y=d_p$, implies that $e_p$ is idempotent. There is an inclusion $\mathcal{B}_{r,s}(\delta)\cdot e_p \subset \mathcal{B}_{r,s}(\delta) \cap J_p$ since $e_p$ lies in $J_p$. For the reverse inclusion, take $y\in \mathcal{B}_{r,s}(\delta) \cap J_p$. Lemma \ref{WB-diag-lem} tells us that $ye_p=\delta^{-\alpha}yd_p=y$, so $y\in \mathcal{B}_{r,s}(\delta)\cdot e_p$ as required.
\end{proof}

\subsection{A result on (co)homology}
We recall the concept of $k$-free idempotent cover from \cite[Section 2]{Boyde2}, together with a result on (co)homology.

\begin{defn}
Let $A$ be a $k$-algebra. Let $I$ be a two-sided ideal of $A$. Let $w\geqslant h \geqslant 1$. An \emph{idempotent left cover of $I$ of height $h$ and width $w$} is a collection of left ideals $J_1,\dotsc , J_w$ in $A$ such that
\begin{itemize}
    \item $J_1+\cdots +J_w=I$;
    \item for $S\subset \llb 1,2,\dotsc ,w\rrb$ with $\llv S\rrv \leqslant h$, the intersection
    \[\bigcap_{i\in S} J_i\]
    is either zero or is a principal left ideal generated by an idempotent.
\end{itemize}
If $I$ is free as a $k$-module, then an idempotent left cover is said to be \emph{$k$-free} if there is a choice of $k$-basis for $I$ such that each $J_i$ is free on a subset of this basis.
\end{defn}

We will make use of the following result which can found as \cite[Theorem B]{Fisher-Graves} (noting that the homological part was originally published as \cite[Theorem 2.7]{Boyde2}).

\begin{prop}
\label{stable-isos-prop}
Let $A$ be an augmented $k$-algebra with trivial module $\mathbbm{1}$. Let $I$ be a two-sided ideal of $A$ which is free as a $k$-module and which acts as multiplication by $0\in k$ on $\mathbbm{1}$. Suppose that there exists a $k$-free idempotent left cover of $I$ of height $h$ and width $w$. There are natural isomorphisms of $k$-modules
\[\Tor_q^A(\mathbbm{1},\mathbbm{1}) \cong \Tor_q^{A/I}(\mathbbm{1},\mathbbm{1})
\quad
\text{and} \quad
\Ext_A^q(\mathbbm{1},\mathbbm{1}) \cong \Ext_{A/I}^q(\mathbbm{1},\mathbbm{1})\]
for $q\leqslant h$. Furthermore, if $h=w$, then the isomorphisms holds for all $q$.
\end{prop}

\subsection{Technical lemmas}

Recall the notions of double diagram and sesqui-diagram from Subsection \ref{sesqui-subsec}, together with the notation introduced therein.

\begin{lem}
\label{WB-link-state-lem}
Let $p$ be the right link state of a walled Brauer $(r+s)$-diagram. Suppose that there exists a walled Brauer $(r+s)$-diagram $e$ such that
\begin{enumerate}
    \item\label{WB1} $e$ has right link state $p$;
    \item\label{WB2} if $p$ has a defect at vertex $\ol{b}$ then there are sequence of edges in the sesqui-diagram $(p,e)$ that connects $b^{\prime}$ and $\ol{b}$;
    \item\label{WB3} each vertex $b^{\prime}$ in $(p,e)$ is connected to the right-hand column of vertices.
\end{enumerate}
Then $ye=y$ for all $y\in \mathcal{B}_{r,s}(\delta)\cap J_p$.
\end{lem}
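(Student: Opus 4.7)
The plan is to adapt the strategy of Lemma \ref{RB-diag-lem} to the walled Brauer setting. Starting from $y \in \mathcal{B}_{r,s}(\delta) \cap J_p$, we form the double diagram $y \ast e$ and verify two things: first, that no closed loop forms entirely in the middle column (so no power of $\delta$ is picked up), and second, that the resulting pairings of $ye$ coincide exactly with those of $y$. Since walled Brauer diagrams are Brauer diagrams with a wall constraint, there are no isolated vertices to worry about, so only the loop count $\alpha$ is relevant.

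First, since $y \in J_p$, the right link state $p_y$ of $y$ is obtained from $p$ by a (possibly empty) sequence of splices; in particular, every non-propagating edge of $p$ still appears in $p_y$, and splicing only introduces new non-propagating edges joining formerly-defect positions. Consequently, the middle-column subgraph of $y \ast e$ contains the middle-column subgraph of the sesqui-diagram $(p,e)$, together with some extra splice edges and with every unspliced defect of $p$ being replaced by a genuine propagating edge of $y$ leading into the left column. The middle-to-right connectivity of $(p,e)$ is thus preserved in $y \ast e$, and condition (\ref{WB3}) ensures every middle vertex $b'$ has a path to the right column. Since no connected component of $y \ast e$ can live entirely in the middle column, the loop count is zero and $ye$ emerges as a diagram rather than a scalar multiple.

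Next, we check the pairings agree with those of $y$. Left-column non-propagating edges of $y$ are preserved trivially. For a propagating edge $a \to \ol{b}$ of $y$, the vertex $\ol{b}$ is a defect of $p_y$, hence also of $p$; condition (\ref{WB2}) then provides a path $b' \rightsquigarrow \ol{b}$ in $(p,e)$ that survives in $y \ast e$, giving the required connection $a \leftrightarrow \ol{b}$ in $ye$. For a non-propagating right-column edge $\ol{a}\,\ol{b}$ of $y$, split into cases: if the edge is inherited from $p$, then condition (\ref{WB1}) yields the matching edge in $e$'s right column, directly connecting $\ol{a}$ and $\ol{b}$ in $y \ast e$; if the edge arose from splicing defects at $\ol{a}$ and $\ol{b}$ of $p$, then condition (\ref{WB2}) gives paths $a' \rightsquigarrow \ol{a}$ and $b' \rightsquigarrow \ol{b}$ in $(p,e)$, which, combined with the splice edge $a' - b'$ from $y$'s right column, produce the connection $\ol{a} \leftrightarrow \ol{b}$ in $y\ast e$.

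The main obstacle is to rule out spurious pairings in $ye$ beyond those just described. For this, one leverages condition (\ref{WB3}) together with the fact that every vertex of $e$ lies on a unique edge: any path in $y\ast e$ between two vertices of the outer columns alternates between edges of $y$ and edges of $e$, and passing through a middle vertex $b'$ forces one to follow the unique $e$-edge incident to $b'$ on one side and the unique $y$-edge incident to $b'$ on the other. Tracking these forced transitions shows that the only connections between outer-column vertices are those exhibited above, and condition (\ref{WB3}) prevents any middle-column cycle from being disconnected from the right column. Combining all this yields $ye = y$, as required.
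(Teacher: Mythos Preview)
Your proof is correct and follows essentially the same approach as the paper's: establish that all pairings of $y$ survive in $y \ast e$ and that no loops form in the middle column, using conditions (\ref{WB1})--(\ref{WB3}) in the same case analysis. Your final paragraph on ruling out spurious pairings is unnecessary (and somewhat hand-wavy as written), since both $y$ and $ye$ are Brauer diagrams---hence perfect matchings on the same $2(r+s)$ vertices---so once every edge of $y$ appears in $ye$ the two diagrams must coincide; the paper omits this step for exactly this reason.
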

\begin{proof}
We will show that if two vertices are connected in the diagram $y$ then they are connected in the double diagram $y\ast e$ and that the double diagram $y\ast e$ contains no loops. If there is a non-propagating edge between $a$ and $b$ in the left-hand column of $y$ then there must also be a non-propagating edge between $a$ and $b$ in the left-hand column of the double diagram $y\ast e$. 

If $y$ has a propagating edge from $a$ to $\ol{b}$, then there is an edge from $a$ to $b^{\prime}$ in the double diagram $y\ast e$. Condition \ref{WB2} tells us that $b^{\prime}$ is connected to $\ol{b}$ in the double diagram $y\ast e$ as required.

Suppose $y$ contains a non-propagating edge between $\ol{a}$ and $\ol{b}$ in its right-hand column. If this non-propagating edge is present in the right link state $p$ then $\ol{a}$ and $\ol{b}$ are connected in the double diagram $y\ast e$ by Condition \ref{WB1}. If the non-propagating edge is not present in the right link state $p$, it is the result of splicing two defects together. In this case $a^{\prime}$ and $b^{\prime}$ are connected in the double diagram $y\ast e$. Condition \ref{WB2} tells us that $a^{\prime}$ is connected to $\ol{a}$ and that $b^{\prime}$ is connected to $\ol{b}$ and so $\ol{a}$ and $\ol{b}$ are connected in the double diagram $y\ast e$ as required.

Finally, Condition \ref{WB3} tells us that every vertex in the middle column of the double diagram $y\ast e$ is connected to the right-hand column and therefore $y\ast e$ cannot contain any loops.
 \end{proof}

\begin{lem}
\label{WB-idem-lem}
Let $p$ be the right link state of a walled Brauer $(r+s)$-diagram with at least one defect. There exists a diagram $e_p$ in $\mathcal{B}_{r,s}(\delta)$ satisfying the conditions of Lemma \ref{WB-link-state-lem}.
\end{lem}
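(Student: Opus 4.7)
The plan is to give an explicit construction of $e_p$ that exploits the existence of at least one defect in $p$. This hypothesis is automatic in the setting of Corollary~\ref{WB-odd-cor} where this lemma is subsequently applied: the walled Brauer condition forces each non-propagating edge to pair an upper vertex (one of the first $r$) with a lower vertex (one of the last $s$), so $r+s = 2m+i$ where $m$ is the number of non-propagating edges and $i$ the number of defects of $p$, hence $i\equiv r+s\pmod 2$ and in particular $i\geq 1$ whenever $r+s$ is odd.

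I would write the non-propagating edges of $p$ as $\{\ol{a_j},\ol{a'_j}\}$ for $j=1,\dotsc,m$ with $\ol{a_j}$ upper and $\ol{a'_j}$ lower, and the defects of $p$ at $\ol{c_1},\dotsc,\ol{c_i}$. Choose a ``pivot'' defect $\ol{c_1}$, and suppose first that $\ol{c_1}$ is upper. Define $e_p$ by preserving the non-propagating edges $\{\ol{a_j},\ol{a'_j}\}$ of $p$ in the right column, including the propagating edges $a_1\to\ol{c_1}$ and $c_k\to\ol{c_k}$ for $k=2,\dotsc,i$, and installing in the left column the cyclically shifted matching
\[
\{a'_1,a_2\},\,\{a'_2,a_3\},\,\dotsc,\,\{a'_{m-1},a_m\},\,\{a'_m,c_1\}.
\]
A direct inspection shows that every non-propagating edge introduced pairs an upper with a lower vertex and every propagating edge joins same-side vertices, so $e_p$ is a valid walled Brauer diagram with right link state $p$, verifying condition~(1) of Lemma~\ref{WB-link-state-lem}.

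The verification of conditions~(2) and~(3) rests on a single observation: in the sesqui-diagram $(p,e_p)$, the left-column $p$-edges and the left-column $e_p$-edges splice together with the propagating edge $a_1\to\ol{c_1}$ to form the alternating path
\[
\ol{c_1}\,-\,a_1\,-\,a'_1\,-\,a_2\,-\,a'_2\,-\,\dotsb\,-\,a_m\,-\,a'_m\,-\,c_1,
\]
in which edges come alternately from $p$ and from $e_p$. This simultaneously shows that the pivot middle vertex $c_1$ is connected to $\ol{c_1}$ (condition~(2) for the pivot) and that every middle vertex lying in a non-propagating pair of $p$ is connected to $\ol{c_1}$ in the right column. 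The remaining defects $\ol{c_k}$ and corresponding middle vertices $c_k$ (for $k\geq 2$) are handled by the straight propagating edges $c_k\to\ol{c_k}$, which supply both conditions~(2) and~(3) for those indices. When instead the pivot must be chosen lower (because all defects of $p$ are lower), the mirror construction applies: the propagating edge $a'_1\to\ol{c_1}$ opens an analogous chain that closes at the pivot middle vertex $c_1$ with left-column non-propagating pairs $\{a_1,a'_2\},\{a_2,a'_3\},\dotsc,\{a_{m-1},a'_m\},\{a_m,c_1\}$.

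The main subtlety is ensuring that the cyclic shift avoids re-introducing any non-propagating pair already present in $p$, which would otherwise break condition~(3) by producing an isolated $2$-cycle in the middle column of the sesqui-diagram. The shift avoids this because each $e_p$-edge $\{a'_j,a_{j+1}\}$ connects endpoints of two distinct $p$-pairs, so the chain grows at every step and only closes after absorbing all non-propagating pairs and reaching the pivot vertex $c_1$.
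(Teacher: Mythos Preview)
Your construction is essentially the same as the paper's: both single out one ``pivot'' defect, send all other defects to horizontal propagating edges, and route the pivot through a chain that visits every non-propagating edge of $p$ via a cyclic shift of the upper/lower endpoints. Your observation that the lemma tacitly requires $p$ to have at least one defect is correct and matches the paper's implicit assumption; note, though, that the relevant application is already Theorem~\ref{WB-odd-thm} (which takes $l=1$, so only link states with $i\geq 1$ defects arise), not just the odd-$r+s$ corollary. One trivial edge case to tidy: when $m=0$ your pivot edge $a_1\to\ol{c_1}$ is undefined, but the construction then degenerates to all horizontal propagating edges, exactly as in the paper.
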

\begin{proof}
We start with two copies of the right link state $p$ placed side-by-side. We will explain how to extend the right-hand copy of $p$ to the necessary diagram $e_p$. We note that Condition \ref{WB1} of Lemma \ref{WB-link-state-lem} is automatically satisfied since we are starting with $p$ as the right link state. There is an illustrative example demonstrating the steps of this lemma following this proof.

Label the vertices of the left-hand copy by $1, 2, \dotsc , r+s$ and label the vertices of the right-hand copy by $\ol{1},\ol{2},\dotsc , \ol{r+s}$.

Extend all but one of the defects in the right-hand copy of $p$ to be connected to the corresponding defect in the left-hand copy of $p$ by a horizontal edge (since these propagating edges are horizontal, they satisfy the conditions on edges in a walled Brauer diagram). We note that the choice of which defect to leave unconnected is immaterial, the following method applies equally well for any choice.

We will join the final defect in the right-hand column of $p$ to the corresponding defect in the left-hand column of $p$, but via a sequence of edges that passes through every non-propagating edge in the left-hand copy of $p$. We note that when we have done this, each defect in the right-hand copy of $p$ will be connected to the corresponding defect in the left-hand copy (so Condition \ref{WB2} of Lemma \ref{WB-link-state-lem} will be satisfied) and every vertex in the left-hand copy of $p$ will be connected to a vertex in the right-hand copy of $p$ (so Condition \ref{WB3} of Lemma \ref{WB-link-state-lem} will be satisfied). We proceed as in \cite[Lemma 8.7]{Boyde}, but we must take extra care to make sure that the non-propagating edges that we add to form our path between the two remaining defects satisfy the conditions of a walled Brauer diagram.

Suppose the right link state $p$ has $x$ non-propagating edges. By the conditions on walled Brauer diagrams, there are $x$ vertices amongst the first $r$ vertices and $x$ vertices amongst the last $s$ vertices that are part of non-propagating edges. Choose a total ordering $1<\cdots <x$ on the non-propagating edges in the left-hand copy of $p$ (any total ordering will do). Label the vertices corresponding to the total ordering $r_1,\dotsc , r_x$ for the vertices in the top part of the diagram and $s_1,\dotsc ,s_x$ for the vertices in the bottom part of the diagram.

Suppose that our final unconnected defect in the right-hand copy of $p$ is amongst the last $s$ vertices, at vertex $\ol{a}$. Connect our last remaining defect in the right-hand copy of $p$ to the vertex $s_1$ in the left-hand copy of $p$. Now join vertex $r_i$ to $s_{i+1}$ (for $1\leqslant i\leqslant x-1$) by non-propagating edges to the right of the vertices in the left-hand copy of $p$. Finally, join $r_x$ to the vertex $a$.

We note that had our remaining defect been amongst the first $r$ vertices (still at some vertex $\ol{a}$) we would join the defect to vertex $r_1$, join $s_i$ to $r_{i+1}$ for $1\leqslant i \leqslant x-1$ and then join $s_x$ to vertex $a$.
\end{proof}

\begin{eg}
Suppose we start with the following right link state, $p$, of a walled Brauer $(2+3)$-diagram:

\begin{center}
    \begin{tikzpicture}
\fill (1,0) circle[radius=2pt];
\fill (1,1) circle[radius=2pt];
\fill (1,2) circle[radius=2pt];
\fill (1,3) circle[radius=2pt];
\fill (1,4) circle[radius=2pt];
\draw[dashed] (-1,2.5) -- (2,2.5);
\draw (1,0) -- (0,0);
\draw (1,1) -- (0,1);
\draw (1,3) -- (0,3);
\path[-] (1,2) edge [bend left=20] (1,4);
\end{tikzpicture}
\end{center}

We place two copies side by side and extend all but one of the defects in the right-hand copy to horizontal edges. In this picture, we have extended the two top-most defects.

\begin{center}
    \begin{tikzpicture}
\fill (1,0) circle[radius=2pt];
\fill (1,1) circle[radius=2pt];
\fill (1,2) circle[radius=2pt];
\fill (1,3) circle[radius=2pt];
\fill (1,4) circle[radius=2pt];
\draw[dashed] (-1,2.5) -- (2,2.5);
\draw (1,0) -- (0,0);
\draw (1,1) -- (0,1);
\draw (1,3) -- (0,3);
\path[-] (1,2) edge [bend left=20] (1,4);

\fill (3,0) circle[radius=2pt];
\fill (3,1) circle[radius=2pt];
\fill (3,2) circle[radius=2pt];
\fill (3,3) circle[radius=2pt];
\fill (3,4) circle[radius=2pt];
\draw[dashed] (-1,2.5) -- (5,2.5);
\draw (3,0) -- (2,0);
\draw (3,1) -- (1,1);
\draw (3,3) -- (1,3);
\path[-] (3,2) edge [bend left=20] (3,4);
\end{tikzpicture}
\end{center}

Finally, we join the remaining defect in the right-hand copy to the corresponding defect in the left-hand copy via a path that passes through the non-propagating edge in the left-hand copy of the link state. We now have a sesqui-diagram $(p,e_p)$ where the diagram $e_p$ satisfies the conditions of Lemma \ref{WB-link-state-lem}. The diagram on the right is $e_p$.

\begin{center}
    \begin{tikzpicture}
\fill (1,0) circle[radius=2pt];
\fill (1,1) circle[radius=2pt];
\fill (1,2) circle[radius=2pt];
\fill (1,3) circle[radius=2pt];
\fill (1,4) circle[radius=2pt];
\draw[dashed] (-1,2.5) -- (2,2.5);
\draw (1,0) -- (0,0);
\draw (1,1) -- (0,1);
\draw (1,3) -- (0,3);
\path[-] (1,2) edge [bend left=20] (1,4);

\fill (3,0) circle[radius=2pt];
\fill (3,1) circle[radius=2pt];
\fill (3,2) circle[radius=2pt];
\fill (3,3) circle[radius=2pt];
\fill (3,4) circle[radius=2pt];
\draw[dashed] (-1,2.5) -- (5,2.5);
\draw (3,0) -- (2,0);
\draw (3,1) -- (1,1);
\draw (3,3) -- (1,3);
\path[-] (3,2) edge [bend left=20] (3,4);

\draw (2,0) -- (1,2);
\path[-] (1,4) edge [bend left=20] (1,0);

\fill (8,0) circle[radius=2pt];
\fill (8,1) circle[radius=2pt];
\fill (8,2) circle[radius=2pt];
\fill (8,3) circle[radius=2pt];
\fill (8,4) circle[radius=2pt];
\fill (9,0) circle[radius=2pt];
\fill (9,1) circle[radius=2pt];
\fill (9,2) circle[radius=2pt];
\fill (9,3) circle[radius=2pt];
\fill (9,4) circle[radius=2pt];
\draw[dashed] (6,2.5) -- (11,2.5);
\draw (9,0) -- (8,2);
\draw (9,1) -- (8,1);
\draw (9,3) -- (8,3);
\path[-] (9,2) edge [bend left=20] (9,4);
\path[-] (8,4) edge [bend left=20] (8,0);
\end{tikzpicture}
\end{center}
\end{eg}

\begin{lem}
\label{defect-lem}
Suppose $r\neq s$ (with $r,s\geqslant 1$). The right link state of any diagram in $\mathcal{B}_{r,s}(\delta)$ has at least one defect.
\end{lem}
\begin{proof}
Suppose $s>r$. Since any non-propagating edge in $\mathcal{B}_{r,s}(\delta)$ must connect an element from the first $r$ vertices to a vertex from the last $s$ vertices, there can be at most $r$ non-propagating edges and so we must have at least $s-r>0$ propagating edges. The same argument works for $r>s$.    
\end{proof}

\subsection{A free idempotent left cover}

\begin{defn}
Let $U$ denote the set of elements $(i,j)$ where $i$ and $j$ are positive integers satisfying $1\leqslant i\leqslant r$ and $r+1\leqslant j\leqslant r+s$.    
\end{defn}

Consider a walled Brauer $(r+s)$-diagram, $d$. Multiplying on the left by any other walled Brauer $(r+s)$-diagram preserves non-propagating edges in the right-hand column of $d$. We can therefore make the following definition.

\begin{defn}
For $(i,j)\in U$, let $L_{i,j}$ be the left ideal of $\mathcal{B}_{r,s}(\delta)$ spanned $k$-linearly by walled Brauer $(r+s)$-diagrams such that the vertices $\ol{i}$ and $\ol{j}$ are joined by a non-propagating edge.  
\end{defn}

\begin{lem}
\label{WB-cover-lem}
The collection of left ideals $L_{i,j}$ (for $(i,j)\in U$) cover the two-sided ideal $I_{r+s-1}$ in $\mathcal{B}_{r,s}(\delta)$.    
\end{lem}
\begin{proof}
A basis diagram in $L_{i,j}$ contains at least one non-propagating edge, and so cannot have $r+s$ propagating edges. Hence each $L_{i,j}$ is contained in $I_{r+s-1}$.

Conversely, a basis diagram in $I_{r+s-1}$ has fewer than $r+s$ propagating edges. Therefore, we must have at least one non-propagating edge in each column of vertices and so the basis diagram must lie in some $L_{i,j}$.
\end{proof}

\begin{lem}
\label{WB-zero-lem}
Let $T\subseteq U$. Then
\[ \bigcap_{(i,j)\in T} L_{i,j}=0\]
if and only if there exist distinct elements $a=(i_1,j_1)$ and $b=(i_2,j_2)$ in $T$ such that one of the co-ordinates of $a$ is equal to one of the co-ordinates of $b$. 
\end{lem}
\begin{proof}
If there exist distinct elements $a=(i_1,j_1)$ and $b=(i_2,j_2)$ in $T$ such that one of the co-ordinates of $a$ is equal to one of the co-ordinates of $b$, then a diagram in the intersection would have a vertex in the right-hand column with two distinct edges incident to it, which contradicts the definition of a walled Brauer $n$-diagram.

Conversely, suppose there are not distinct elements $a=(i_1,j_1)$ and $b=(i_2,j_2)$ in $T$ such that one of the co-ordinates of $a$ is equal to one of the co-ordinates of $b$. We construct a diagram in the intersection as follows. For each $(i,j)\in T$, we have a non-propagating edge joining $i$ and $j$ and a non-propagating edge joining $\ol{i}$ and $\ol{j}$. For each vertex $v$ not included in the indexing set $T$, we have a propagating edge joining $v$ and $\ol{v}$.
\end{proof}

\begin{lem}
\label{WB-odd-intersection-lem}
Suppose $r\neq s$ (with $r,s\geqslant 1$). Suppose there are not distinct elements $a=(i_1,j_1)$ and $b=(i_2,j_2)$ in $T$ such that one of the co-ordinates of $a$ is equal to one of the co-ordinates of $b$. Then the intersection
\[\bigcap_{(i,j)\in T} L_{i,j}\]
is principal and generated by an idempotent.
\end{lem}
\begin{proof}
Let $q$ be the right link state such that for each $(i,j)\in T$, we have a non-propagating edge joining $\ol{i}$ and $\ol{j}$ and for each vertex $v$ not included in the indexing set $T$ we have a defect. We have
\[\bigcap_{(i,j)\in T} L_{i,j} =J_q.\]
Since $r\neq s$, $q$ must have at least one defect by Lemma \ref{defect-lem}. Lemmas \ref{WB-link-state-lem} and \ref{WB-idem-lem} tell us that there exists a diagram $e_q$ such that right multiplication by $e_q$ gives a retraction $\mathcal{B}_{r,s}(\delta) \rightarrow \mathcal{B}_{r,s}(\delta) \cap J_q$. It follows from \cite[Lemma 2.6]{Boyde2} that $J_q$ is principal and generated by an idempotent.
\end{proof}

\begin{lem}
\label{WB-even-intersection-lem}
Suppose $r=s$ (with $r,s\geqslant 1$). Suppose there are not distinct elements $a=(i_1,j_1)$ and $b=(i_2,j_2)$ in $T$ such that one of the co-ordinates of $a$ is equal to one of the co-ordinates in $b$. Suppose that 
\[\bigcup_{(i,j)\in T} \lbrace i,j\rbrace  \neq \llb 1,2,\dotsc,r+s\rrb.\]
Then the intersection
\[\bigcap_{(i,j)\in T} L_{i,j}\]
is principal and generated by an idempotent.
\end{lem}
\begin{proof}
Since the union of all indices in $T$ does not include every vertex in the right-hand column,
the same proof as Lemma \ref{WB-odd-intersection-lem} applies here, since we must have at least one defect.
\end{proof}

\begin{prop}
\label{WB-odd-idem-cover-prop}
Suppose $r\neq s$ (with $r,s\geqslant 1$). The left ideals $L_{i,j}$ (for $(i,j)\in U$) form a $k$-free idempotent left cover of $I_{r+s-1}$ such that the height is equal to the width.   
\end{prop}
\begin{proof}
This follows from Lemmas \ref{WB-cover-lem}, \ref{WB-zero-lem} and \ref{WB-odd-intersection-lem}.    
\end{proof}

\begin{prop}
\label{WB-even-idem-cover-prop}
Suppose $r=s$ ($r,s\geqslant 1$). The left ideals $L_{i,j}$ (for $(i,j)\in U$) form a $k$-free idempotent left cover of $I_{r+s-1}$ of height $\frac{r+s}{2}-1$.   
\end{prop}
\begin{proof}
This follows from Lemmas \ref{WB-cover-lem}, \ref{WB-zero-lem} and \ref{WB-even-intersection-lem}.    
\end{proof}

\subsection{(Co)homology of walled Brauer algebras}

\begin{thm}
\label{WB-cohom-thm}
Let $r,s\geqslant 1$. For any $\delta\in k$ and for $r\neq s$, there exist isomorphisms of graded $k$-modules
\[\Tor_{\star}^{\mathcal{B}_{r,s}(\delta)}(\1,\1) \cong H_{\star}(\Sigma_{r}\times \Sigma_s,\1) \quad \text{and} \quad \Ext_{\mathcal{B}_{r,s}(\delta)}^{\star}(\1,\1) \cong H^{\star}(\Sigma_{r}\times \Sigma_s,\1).\]
Furthermore, when $r=s$, these isomorphisms hold in the range $0\leqslant \star\leqslant \frac{r+s}{2}-1$.  
\end{thm}
\begin{proof}
This follows from Proposition \ref{stable-isos-prop} with the idempotent left covers of Proposition \ref{WB-odd-idem-cover-prop} (when $r\neq s$) and Proposition \ref{WB-even-idem-cover-prop} (when $r=s$), together with the isomorphism of $k$-algebras $\mathcal{B}_{r,s}(\delta)/I_{r+s-1}\cong k[\Sigma_r\times \Sigma_s]$.
\end{proof}

We will now show that the range found in Theorem \ref{WB-cohom-thm} for the case $r=s$ cannot be extended in general by showing that it is sharp for the case of $\mathcal{B}_{1,1}(\delta)$.

\begin{prop}
\label{1+1-prop}
There is an isomorphism of $k$-algebras $\mathcal{B}_{1,1}(\delta)\cong \mathcal{TL}_2(\delta)$. In particular, there exist isomorphisms of $k$-modules
\[\Tor_q^{\mathcal{B}_{1,1}(\delta)}(\1,\1) \cong \begin{cases}
    k & q=0\\
    k/\delta k & q>0,\,\text{$q$ odd}\\
    k_{\delta} & q>0,\, \text{$q$ even}
\end{cases}
\]
and 
\[\Ext_{\mathcal{B}_{1,1}(\delta)}^q(\1,\1) \cong \begin{cases}
    k & q=0\\
    k/\delta k & q>0,\,\text{$q$ even}\\
    k_{\delta} & q>0,\, \text{$q$ odd}
\end{cases}\]
where $k_{\delta}$ is the kernel of the map $k\rightarrow k$ given by multiplication by $\delta$.
\end{prop}
\begin{proof}
We note that both algebras have two basis diagrams. The isomorphism of $k$-algebras is determined by sending the identity diagram to the identity diagram, the non-identity diagram to the non-identity diagram and noting in both cases that the non-identity diagram squares to $\delta$ lots of itself.

The isomorphisms on (co)homology follow from the isomorphism of $k$-algebras together with the calculations of Boyd and Hepworth \cite[Proposition 7.1]{BH1}.
\end{proof}

\begin{cor}
The range in Theorem \ref{WB-cohom-thm} is sharp when $r=s=1$.    
\end{cor}
\begin{proof}
Theorem \ref{WB-cohom-thm} tells us that $\Tor_q^{\mathcal{B}_{1,1}(\delta)}(\1,\1)$ and $\Ext_{\mathcal{B}_{1,1}(\delta)}^q(\1,\1)$ agree with the (co)homology of $\Sigma_1\times \Sigma_1$, namely the trivial group, when $q=0$. However, this isomorphism does not extend beyond $q=0$. The (co)homology of $\Sigma_1\times \Sigma_1$ is trivial in positive degrees, whereas Proposition \ref{1+1-prop} tells us that the (co)homology of $\mathcal{B}_{1,1}(\delta)$ is non-trivial in positive degrees. 
\end{proof}

\begin{rem}
    The question as to whether the range found in Theorem \ref{WB-cohom-thm} is sharp for $r=s\geqslant 2$ remains open. This is similar to the case for Brauer algebras $\mathcal{B}_{n}(\delta)$ with $n$ even. 
\end{rem}

\end{document}